\renewcommand\normalsize{
   \@setfontsize\normalsize\@xpt\@xiipt
   \abovedisplayskip 5\p@ \@plus2\p@ \@minus5\p@   
   \abovedisplayshortskip \z@ \@plus3\p@
   \belowdisplayshortskip 6\p@ \@plus3\p@ \@minus3\p@
   \belowdisplayskip \abovedisplayskip
   \let\@listi\@listI}
\theoremstyle{plain}
\newtheorem{theorem}{Theorem}[section]
\newtheorem{proposition}[theorem]{Proposition}
\newtheorem{lemma}[theorem]{Lemma}
\newtheorem{corollary}[theorem]{Corollary}
\theoremstyle{definition}
\newtheorem{definition}[theorem]{Definition}    
\theoremstyle{remark}
\newtheorem{remark}[theorem]{Remark}
\begin{document}
\title[Left derivable or Jordan left derivable mappings]{Left derivable or Jordan left derivable mappings on Banach algebras}
\author[Y. Ding]{Yana Ding}
\address[Yana Ding]{Department of Mathematics, East China University of Science and Technology, Shanghai, P. R. China}
\email{dingyana@mail.ecust.edu.cn}
\author[J. Li]{Jiankui Li $^\ast$$^\dag$}
\address[Jiankui Li]{Department of Mathematics, East China University of Science and Technology, Shanghai, P. R. China}
\email{jiankuili@yahoo.com}
\thanks{$^\ast$Corresponding author. E-mail address: jiankuili@yahoo.com}
\thanks{$^\dag$This research was partially supported by National Natural Science Foundation of China (Grant No. 11371136).}
\begin{abstract}
Let $\delta$ be a linear mapping from a unital Banach algebra $\mathcal{A}$ into a unital left $\mathcal{A}$-module $\mathcal{M}$, and $W$ in $\mathcal{Z}(\mathcal{A})$ be a left separating point of $\mathcal{M}$. We show that the following three conditions are equivalent: ($i$) $\delta$ is a Jordan left derivation; ($ii$)$\delta$ is left derivable at $W$; ($iii$) $\delta$ is Jordan left derivable at $W$. Let $\mathcal{A}$ be a Banach algebra with the property ($\mathbb{B}$) (see Definition \ref{Prop_B}), and $\mathcal{M}$ be a Banach left $\mathcal{A}$-module. We consider the relations between generalized (Jordan) left derivations and (Jordan) left derivable mappings at zero from $\mathcal{A}$ into $\mathcal{M}$.\\
\textbf{Keywords:}  (Jordan) left derivation, generalized (Jordan) left derivation, (Jordan) left derivable mapping.  \\
\textbf{MSC(2010):}  47B47, 47B49, 47L10.
\end{abstract}
\maketitle

\vspace{1em}
\section{Introduction}

Let $\mathcal{A}$ be a Banach algebra over the complex field $\mathbb{C}$. As usual, for each $A,B$ in $\mathcal{A}$, we write $A\circ B$ for the \emph{Jordan product} $AB+BA$, while $A\cdot B$ or $AB$ denotes the ordinary \emph{product} of $A$ and $B$. The \emph{centre} of $\mathcal{A}$ is $\mathcal{Z}(\mathcal{A})=\{A\in\mathcal{A}:AB=BA$ for each $B$ in $\mathcal{A}\}$. Let $\mathcal{M}$ be a left $\mathcal{A}$-module. The \emph{right annihilator} of $\mathcal{A}$ on $\mathcal{M}$ is $Rann_{\mathcal{M}}(\mathcal{A})=\{M\in\mathcal{M}:AM=0$ for each $A$ in $\mathcal{A}\}$. An element $W$ in $\mathcal{A}$ is said to be \emph{a left separating point of $\mathcal{M}$}, if for each $M$ in $\mathcal{M}$ satisfying $WM=0$, we have $M=0$.

Let $\delta$ be a linear mapping from $\mathcal{A}$ into a left $\mathcal{A}$-module $\mathcal{M}$. $\delta$ is called a \emph{left derivation} if $\delta(AB)=A\delta(B)+B\delta(A)$ for each $A,B$ in $\mathcal{A}$, and is called a \emph{Jordan left derivation} if $\delta(A\circ B)=2A\delta(B)+2B\delta(A)$ for each $A,B$ in $\mathcal{A}$. Let $C\in\mathcal{A}$. $\delta$ is said to be \emph{left derivable at $C$} if $\delta(AB)=A\delta(B)+B\delta(A)$ for each $A,B$ in $\mathcal{A}$ satisfying $AB=C$, and to be \emph{Jordan left derivable at $C$} if $\delta(A\circ B)=2A\delta(B)+2B\delta(A)$ for each $A,B$ in $\mathcal{A}$ satisfying $A\circ B=C$.

The concepts of left derivations and Jordan left derivations are introduced by Bresar and Vukman in \cite{Bresar Vukman}. For results concerning the relationship between left derivations and Jordan left derivations on prime rings, we refer the reader to \cite{Ashraf Rehman Ali, Bresar Vukman, Vukman semiprime-ring, Vukman ring Banach-algebra}.
It's natural that every (Jordan) left derivation is (Jordan) left derivable at each point.
There have been a number of papers concerning the study of conditions under which (Jordan) derivations can be completely determined by the action on some sets of points \cite{Alaminos Bresar others C*, Alaminos Bresar others C* through 0, Alaminos Bresar others, Chebotar Ke Lee 0, Chebotar Ke Lee Zhang 0 Jordan, Ghahramani nontrivial-id, Hou Zhao 0, Zhu Xiong nest, Zhu Xiong reflexive}. Using the techniques of researching (Jordan) derivations, several authors are devoted to study the linear (or additive) mappings on some algebras behaving like (Jordan) left derivations when acting on special products. In \cite{Li Zhou OM}, Li and Zhou study left derivable mappings at zero on some algebras. In \cite{Ebadiana Gordji, Fadaee Ghahramani reflexive id}, the authors consider a continuous mapping $\delta$ satisfying $A\delta(A^{-1})+A^{-1}\delta(A)=0$ for each invertible element $A$ in von Neumann algebras or Banach algebras. In \cite{Fadaee Ghahramani reflexive id, Ghahramani reflexive id, Li Zhou OM}, the authors characterize continuous (Jordan) left derivable mappings at the identity or non-trivial idempotents on some algebras. Let $\mathcal{R}$ be a 2-torsion free prime ring, and $\mathcal{U}$ be a Lie ideal of $\mathcal{R}$ satisfying $U^2\in\mathcal{U}$ for each $U\in\mathcal{U}$. Ashraf, Rehman and Ali\cite{Ashraf Rehman Ali} show that if an additive mapping $\delta$ on $\mathcal{R}$ satisfying $\delta(U^2)=2U\delta(U)$ for each $U$ in $\mathcal{U}$, then either $\mathcal{U}\in\mathcal{Z(R)}$ or $\delta(\mathcal{U})=(0)$. In this paper, we study (Jordan) left derivable mappings at zero or some non-zero points.

The paper is organized as follows.
In Section \ref{nonzero}, we characterize linear mappings (Jordan) left derivable at some non-zero points without continuity assumption. Let $\delta$ be a linear mapping from a unital Banach algebra $\mathcal{A}$ into a unital left $\mathcal{A}$-module $\mathcal{M}$, and $C$ be a non-zero element in $\mathcal{A}$. If $\delta$ is left derivable at $C$, then $C\cdot\delta$ is a Jordan left derivation. In addition, when $C\in\mathcal{Z}(\mathcal{A})$, we prove that if $\delta$ is Jordan left derivable at $C$, then $C\cdot\delta$ is also a Jordan left derivation. Let $I$ be the identity of $\mathcal{A}$, and $W$ in $\mathcal{Z}(\mathcal{A})$ be a left separating point of $\mathcal{M}$. As applications of the preceding results, we conclude that $\delta$ is a Jordan left derivation if and only if $\delta$ is (Jordan) left derivable at $I$ or $W$, which generalizes the corresponding results in \cite{Ebadiana Gordji, Fadaee Ghahramani reflexive id, Ghahramani reflexive id}.

In Section \ref{B}, we prove that if $\delta$ is a linear mapping from a Banach algebra $\mathcal{A}$ into a Banach left $\mathcal{A}$-module $\mathcal{M}$, and $\delta$ is a generalized (Jordan) left derivation (see Definition \ref{generalized_(J)LD}) with $\xi$ in $\mathcal{M}^{\ast\ast}$, then $\xi$ can be chosen in $\mathcal{M}$ when $\mathcal{A}$ is unital or when $\mathcal{M}$ is a dual left $\mathcal{A}$-module. Let $\mathcal{A}$ have the property ($\mathbb{B}$) (see Definition \ref{Prop_B}). We consider the relations between generalized (Jordan) left derivations and (Jordan) left derivable mappings at zero from $\mathcal{A}$ into $\mathcal{M}$.

\vspace{1em}
\section{(Jordan) left derivable mappings at some non-zero points}\label{nonzero}

In this section, we suppose that $\mathcal{A}$ is a Banach algebra with identity $I$ and $\mathcal{M}$ is a unital left $\mathcal{A}$-module, unless stated otherwise. We consider linear mappings (Jordan) left derivable at some non-zero points without continuity assumption.

At first, we consider a continuous mapping $\delta$ satisfying $A\delta(A^{-1})+A^{-1}\delta(A)=0$ for each invertible element $A$ in $\mathcal{A}$. Fadaee and Ghahramani \cite{Fadaee Ghahramani reflexive id} consider it when $\delta$ is continuous. In this paper, we use a different technique to consider it while $\delta$ is not necessarily continuous.Replacing \cite[Lemma 2.1]{Fadaee Ghahramani reflexive id} with the following Lemma \ref{nonzero_lemma1}, we have the assumption of continuity of $\delta$ is not necessary in \cite[Proposition 2.2 and Theorem 2.5]{Fadaee Ghahramani reflexive id}.

\begin{lemma}\label{nonzero_lemma1}
Let $\delta:\mathcal{A}\rightarrow\mathcal{M}$ be a linear mapping. If for each invertible element $A$ in $\mathcal{A}$, we have
$$A\delta(A^{-1})+A^{-1}\delta(A)=\delta(I),$$
then $\delta$ is a Jordan left derivation.
\end{lemma}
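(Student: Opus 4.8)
The plan is to first determine $\delta(I)$, then to prove $\delta(A^{2})=2A\delta(A)$ for every $A\in\mathcal{A}$ by exploiting the invertible translates $A-\lambda I$ with $\lambda$ in the resolvent set $\rho(A)$, and finally to polarize to get the Jordan left derivation identity. Putting $A=I$ in the hypothesis gives $2\delta(I)=\delta(I)$, hence $\delta(I)=0$, so the hypothesis becomes $A\delta(A^{-1})+A^{-1}\delta(A)=0$ for every invertible $A$. Now fix $A\in\mathcal{A}$. Since $\mathcal{A}$ is a unital Banach algebra over $\mathbb{C}$, $\sigma(A)$ is a nonempty compact set, so $\rho(A)$ is a nonempty (indeed unbounded) open set, and for $\lambda\in\rho(A)$ the element $A-\lambda I$ is invertible. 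Applying the hypothesis to $A-\lambda I$, using $\delta(A-\lambda I)=\delta(A)$ (as $\delta(I)=0$), and multiplying on the left by $(A-\lambda I)^{-1}$, one obtains
\[
\delta\bigl((A-\lambda I)^{-1}\bigr)=-(A-\lambda I)^{-2}\delta(A)\qquad(\lambda\in\rho(A)).
\]

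Next I would bring in a second parameter. Fix distinct $\lambda,\mu\in\rho(A)$ and write $R_{\nu}=(A-\nu I)^{-1}$. By the spectral mapping theorem the element $q(A):=(A-\lambda I)(A-\mu I)$ is invertible, with $q(A)^{-1}=R_{\lambda}R_{\mu}$, and $\delta(q(A))=\delta(A^{2})-(\lambda+\mu)\delta(A)$ since $\delta(I)=0$. I would then compute $\delta(q(A)^{-1})$ in two ways: first from the hypothesis applied to $q(A)$, which gives $\delta(q(A)^{-1})=-q(A)^{-2}\bigl[\delta(A^{2})-(\lambda+\mu)\delta(A)\bigr]$; second from the resolvent identity $R_{\lambda}R_{\mu}=\tfrac{1}{\lambda-\mu}(R_{\lambda}-R_{\mu})$ combined with the displayed formula above, which gives $\delta(q(A)^{-1})=\tfrac{1}{\lambda-\mu}(R_{\mu}^{2}-R_{\lambda}^{2})\delta(A)$. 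Equating these and multiplying on the left by $(A-\lambda I)^{2}(A-\mu I)^{2}$, everything collapses --- using $\tfrac{1}{\lambda-\mu}\bigl[(A-\lambda I)^{2}-(A-\mu I)^{2}\bigr]=(\lambda+\mu)I-2A$ and the cancellation of the $(\lambda+\mu)\delta(A)$ terms --- to $\delta(A^{2})=2A\delta(A)$, valid for every $A\in\mathcal{A}$. Replacing $A$ by $A+B$ and cancelling $\delta(A^{2})=2A\delta(A)$ and $\delta(B^{2})=2B\delta(B)$ then yields $\delta(A\circ B)=2A\delta(B)+2B\delta(A)$, so $\delta$ is a Jordan left derivation.

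The heart of the argument --- and what lets us drop the continuity assumption used in \cite{Fadaee Ghahramani reflexive id} --- is this two-parameter maneuver: the displayed formula depends on $\lambda$, and one would like to ``differentiate in $\lambda$'', but $\delta$ is not assumed continuous; comparing the values at two distinct parameters $\lambda\neq\mu$ through the resolvent identity is a purely algebraic substitute for that differentiation. This is essentially the only place where the Banach-algebra structure is genuinely used (nonemptiness of the spectrum, to guarantee an infinite supply of invertible translates $A-\lambda I$, and the spectral mapping theorem, to guarantee that $q(A)$ is invertible). The remaining work is careful bookkeeping with the mutually commuting resolvents $R_{\lambda},R_{\mu}$ acting on the left module $\mathcal{M}$; I do not expect a real obstacle there, only the need to keep the one-sided (left) module action --- rather than a two-sided action --- straight throughout the computation.
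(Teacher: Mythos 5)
Your argument is correct: $\delta(I)=0$, the inversion identity $\delta(X^{-1})=-X^{-2}\delta(X)$, and the two-parameter comparison via the resolvent identity do yield $\delta(A^{2})=2A\delta(A)$ for every $A$ (all the algebra elements involved commute, so the left-module bookkeeping goes through), and polarization finishes the proof exactly as you say. The route, however, is genuinely different from the paper's. The paper starts the same way ($\delta(I)=0$ and $\delta(B)=-B^{2}\delta(B^{-1})$), but its core is a one-parameter telescoping computation: for arbitrary $T$ it takes $B=nI+T$ with $n\geq\|T\|+1$, so that $B$ and $I-B$ are both invertible, writes $B^{-1}=B^{-1}(I-B)^{2}+2I-B$, and iterates the inversion identity through $B^{-1}(I-B)^{2}$, $(I-B)^{-2}$ and $(I-B)^{-1}$ until everything collapses to $\delta(B^{2})=2B\delta(B)$; your two resolvent parameters $\lambda\neq\mu$ and the identity $R_{\lambda}R_{\mu}=\frac{1}{\lambda-\mu}(R_{\lambda}-R_{\mu})$ replace that telescoping and arguably make the mechanism (an algebraic surrogate for differentiating in $\lambda$) more transparent and symmetric. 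Two minor observations: the spectral mapping theorem is overkill for the invertibility of $q(A)=(A-\lambda I)(A-\mu I)$, since a product of commuting invertibles is invertible with inverse $R_{\mu}R_{\lambda}$; and the paper's choice of integer translates is what lets its remark extend the result verbatim to 2-torsion free rings where $nI\pm T$ are invertible, whereas your version as written uses the complex resolvent set and a division by $\lambda-\mu$ (harmless over $\mathbb{C}$, and avoidable by taking $\mu=\lambda+1$, but worth noting if one wants that extra generality).
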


\begin{proof}[\bf Proof]
  By assumption, we have $\delta(I)+\delta(I)=\delta(I)$. Thus, $\delta(I)=0$.
  Let $A$ be invertible in $\mathcal{A}$. By $A\delta(A^{-1})+A^{-1}\delta(A)=\delta(I)=0$, it follows that
  \begin{equation}\label{nonzero_lemma1_2}
    \delta(A)=-A^2\delta(A^{-1}).
  \end{equation}
   Let $T\in\mathcal{A}$, $n\in\mathbb{N}^{+}$ with $n\geq\|T\|+1$, and $B=nI+T$. Then $B$ and $(I-B)$ are both invertible in $\mathcal{A}$. By (\ref{nonzero_lemma1_2}),
  \begin{small}
  \begin{align*}
    \delta(B)=&-B^2\delta(B^{-\!1})=-B^2\delta(B^{-\!1}(I-B)^2-B) \\
    =&-B^2\delta(B^{-\!1}(I-B)^2)+B^2\delta(B) \\
    =&B^2(B^{-\!1}(I-B)^2)^2\delta((I-B)^{-\!2}B)+B^2\delta(B) \\
    =&(I-B)^4\delta((I-B)^{-\!2})-(I-B)^2(I-B)^2\delta((I-B)^{-\!1})+B^2\delta(B) \\
    =&-\delta((I-B)^2)+(I-B)^2\delta(I-B)+B^2\delta(B) \\
    =&\delta(B)-\delta(B^2)+2B\delta(B).
  \end{align*}
  \end{small}
  Thus, $\delta(B^2)=2B\delta(B)$. Since $B=nI+T$, we have that $\delta((nI+T)^2)=2(nI+T)\delta(nI+T)$, then
  $$\delta(T^2)=2T\delta(T)$$
  for each $T$ in $\mathcal{A}$. That is, $\delta$ is a Jordan left derivation.
\end{proof}

\begin{theorem}\label{nonzero_THC}
Let $\delta:\mathcal{A}\rightarrow\mathcal{M}$ be a linear mapping, and $C$ be a non-zero element in $\mathcal{A}$. If $\delta$ is left derivable at $C$, then $C\cdot\delta$ is a Jordan left derivation.
\end{theorem}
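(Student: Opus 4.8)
The plan is to reduce the whole statement to Lemma~\ref{nonzero_lemma1} applied to the linear map $\Delta:=C\cdot\delta\colon\mathcal{A}\to\mathcal{M}$. Concretely, it suffices to prove that
\[
A\,\Delta(A^{-1})+A^{-1}\Delta(A)=\Delta(I)\qquad\text{for every invertible }A\in\mathcal{A};
\]
granting this, Lemma~\ref{nonzero_lemma1} says at once that $\Delta=C\cdot\delta$ is a Jordan left derivation, which is the assertion. So the entire proof is the verification of this one identity for $\Delta$.

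First I would record the elementary consequences of left derivability at $C$. Applying it to $C=I\cdot C$ gives $\delta(C)=\delta(C)+C\delta(I)$, hence $C\delta(I)=0$, i.e.\ $\Delta(I)=0$; by associativity of the module action this also forces $(XC)\delta(I)=X\cdot\bigl(C\delta(I)\bigr)=0$ for every $X\in\mathcal{A}$. Next, for invertible $A$ feed the two factorizations $C=A\cdot(A^{-1}C)$ and $C=A^{-1}\cdot(AC)$ into the hypothesis to get
\[
\delta(C)=A\,\delta(A^{-1}C)+(A^{-1}C)\,\delta(A),\qquad
\delta(C)=A^{-1}\delta(AC)+(AC)\,\delta(A^{-1}).
\]
Multiplying the second identity on the left by $A$ yields $A^{2}\Delta(A^{-1})=A\,\delta(C)-\delta(AC)$; since $\Delta(I)=0$, the identity to be proved for $\Delta$ (written as $\Delta(A)+A^{2}\Delta(A^{-1})=0$) is therefore equivalent to
\[
\delta(AC)=A\,\delta(C)+C\,\delta(A)\qquad\text{for every invertible }A,
\]
which I shall call $(\ast)$. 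Everything is thus reduced to establishing $(\ast)$.

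The core difficulty is to prove $(\ast)$ with no continuity assumption on $\delta$. The device that replaces differentiating a one‑parameter family of factorizations is an exact polynomial perturbation: for any $Z\in\mathcal{A}$ with $Z^{n}=0$, the element $I+tZ$ is invertible for every scalar $t$ with $(I+tZ)^{-1}=\sum_{k=0}^{n-1}(-t)^{k}Z^{k}$, so for each invertible $A$ the product
\[
C=\bigl(A^{-1}(I+tZ)\bigr)\cdot\Bigl(\bigl(\textstyle\sum_{k=0}^{n-1}(-t)^{k}Z^{k}\bigr)AC\Bigr)
\]
is a factorization of $C$ whose two factors are polynomials in $t$. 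Applying left derivability at $C$ and equating coefficients of the powers of $t$ yields a family of identities in $\mathcal{M}$, valid for all invertible $A$ and all nilpotent $Z$. From the coefficient of $t^{1}$ (taking $A=I$ and using $(ZC)\delta(I)=0$) one reads off $\delta(ZC)=Z\delta(C)+C\delta(Z)$ for every nilpotent $Z$; comparing this with the coefficient of $t^{2}$ and using that $Z^{2}$ is again nilpotent gives $\Delta(Z^{2})=2Z\Delta(Z)$ for every nilpotent $Z$. Finally, combining the perturbation identities taken around a general invertible $A$ (not only around $I$) with $C\delta(I)=0$ and with the spectral‑shift manoeuvre $X=nI+T$ from the proof of Lemma~\ref{nonzero_lemma1}, one upgrades these partial statements to $(\ast)$ for every invertible $A$; Lemma~\ref{nonzero_lemma1} then closes the proof.

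The step I expect to be the main obstacle is exactly this last upgrade — obtaining $(\ast)$ for an \emph{arbitrary} invertible $A$ (equivalently, by Lemma~\ref{nonzero_lemma1}, the Jordan identity for $C\cdot\delta$ at an arbitrary element of $\mathcal{A}$). Two features make a short argument impossible: the absence of continuity forbids any limiting or density reduction, so one is confined to the exact polynomial identities coming from (essentially nilpotent) algebraic perturbations, which on their own only reach elements ``built near the identity''; and since $C$ need not lie in $\mathcal{Z}(\mathcal{A})$, every rearrangement slides $C$ past other elements, so one must keep careful track of all commutators produced and repeatedly use $C\delta(I)=0$ and the coefficient identities above to eliminate them.
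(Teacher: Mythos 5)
Your reduction is sound and in fact coincides with the paper's strategy: from left derivability at $C$ you correctly extract $C\delta(I)=0$ and the identities coming from the factorization $C=A\cdot(A^{-1}C)$, and you correctly identify that everything hinges on the single identity $(\ast)$: $\delta(AC)=A\delta(C)+C\delta(A)$ for invertible $A$, after which Lemma \ref{nonzero_lemma1} applied to $C\cdot\delta$ finishes the argument. But the proof of $(\ast)$ --- which is the actual content of the theorem --- is missing. The nilpotent-perturbation device cannot deliver it: a general unital Banach algebra may have no nonzero nilpotent elements at all (take $\mathcal{A}=C(X)$ or any commutative semisimple Banach algebra), so the identities you extract from the coefficients of $t$, namely $\delta(ZC)=Z\delta(C)+C\delta(Z)$ and $\Delta(Z^2)=2Z\Delta(Z)$ for nilpotent $Z$, are vacuous in general; and even when nilpotents are plentiful, these identities only constrain $\delta$ along nilpotent directions and give no handle on an arbitrary invertible $A$. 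The decisive ``upgrade\dots to $(\ast)$ for every invertible $A$'' is asserted rather than argued --- you yourself flag it as the main obstacle --- so the proposal has a genuine hole exactly where the difficulty of the theorem sits.

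The paper closes this hole by a purely algebraic substitution chain that needs neither nilpotents nor continuity. For arbitrary $T\in\mathcal{A}$ put $B=nI+T$ with $n\geq\|T\|+1$, so that $B$ and $I-B$ are both invertible. Starting from (\ref{nonzero_THC_1}) with $A=B$, write $B^{-1}C=B^{-1}(I-B)C+C$, then apply (\ref{nonzero_THC_2}) to the invertible element $(I-B)^{-1}B$ and (\ref{nonzero_THC_1}) to $(I-B)^{-1}$, using $C\delta(I)=0$ and the fact that $B$ and $I-B$ commute to simplify. The chain collapses to $B^{-1}C\delta(B)=-\delta(C)+B^{-1}\delta(BC)$, hence $\delta(BC)=B\delta(C)+C\delta(B)$, and since $B=nI+T$ with $C\delta(I)=0$, linearity gives $\delta(TC)=T\delta(C)+C\delta(T)$ for \emph{every} $T\in\mathcal{A}$ --- in particular $(\ast)$ with $T=A^{-1}$, which is all your reduction requires. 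Replacing your perturbation paragraph with this computation makes the rest of your argument go through.
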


\begin{proof}[\bf Proof]
If $\delta$ is left derivable at $C$, then $\delta(C)=\delta(I\cdot C)=C\delta(I)+\delta(C)$. That is, $C\delta(I)=0$. Let $A$ be invertible in $\mathcal{A}$. We have that $\delta(C)=\delta(A\cdot(A^{-1}C))=A\delta(A^{-1}C)+A^{-1}C\delta(A)$. So
\begin{eqnarray}
    A^{-1}C\delta(A)=\delta(C)-A\delta(A^{-1}C), \label{nonzero_THC_1}\\
    \delta(A^{-1}C)=A^{-1}\delta(C)-A^{-2}C\delta(A). \label{nonzero_THC_2}
\end{eqnarray}
Let $T\in\mathcal{A}$, $n\in\mathbb{N}^{+}$ with $n\geq\|T\|+1$, and $B=nI+T$. Then $B$ and $(I-B)$ are both invertible in $\mathcal{A}$. By (\ref{nonzero_THC_1}) and (\ref{nonzero_THC_2}),
\begin{small}
\begin{align*}
    B^{-\!1}C\delta(B)=&\delta(C)-B\delta(B^{-\!1}C)=\delta(C)-B\delta(B^{-\!1}(I-B)C+C) \\
    =&(I-B)\delta(C)-B\delta(B^{-\!1}(I-B)C) \\
    =&(I-B)\delta(C)-B[B^{-\!1}(I-B)\delta(C)-(B^{-\!1}(I-B))^{2}C\delta((I-B)^{-\!1}B)] \\
    =&(I-B)B^{-\!1}(I-B)C\delta((I-B)^{-\!1}) \\
    =&(I-B)B^{-\!1}[\delta(C)-(I-B)^{-\!1}\delta((I-B)C)] \\
    =&-\delta(C)+B^{-\!1}\delta(BC).
\end{align*}
\end{small}
Thus, $B^{-1}\delta(BC)=\delta(C)+B^{-1}C\delta(B)$. Then
$$\delta(BC)=B\cdot B^{-1}\delta(BC)=B\delta(C)+C\delta(B).$$
Since $B=nI+T$, $\delta(nC+TC)=(nI+T)\delta(C)+C\delta(nI+T)$, that is
\begin{equation}\label{nonzero_THC_4}
  \delta(TC)=T\delta(C)+C\delta(T)
\end{equation}
for each $T$ in $\mathcal{A}$. Then for each invertible element $A$ in $\mathcal{A}$, by (\ref{nonzero_THC_4}),
\begin{align*}
    \delta(C)=&A\delta(A^{-1}C)+A^{-1}C\delta(A) \\
    =&AA^{-1}\delta(C)+AC\delta(A^{-1})+A^{-1}C\delta(A) \\
    =&\delta(C)+AC\delta(A^{-1})+A^{-1}C\delta(A),
\end{align*}
that is, $AC\delta(A^{-1})+A^{-1}C\delta(A)=0=C\delta(I)$ for each invertible element $A$ in $\mathcal{A}$. Let $\tilde{\delta}=C\cdot\delta$. By Lemma \ref{nonzero_lemma1}, we have that $\tilde{\delta}$ is a Jordan left derivation.
\end{proof}

\begin{theorem}\label{nonzero_THCJ}
Let $\delta:\mathcal{A}\rightarrow\mathcal{M}$ be a linear mapping, and $C$ be a non-zero element in $\mathcal{Z}(\mathcal{A})$. If $\delta$ is Jordan left derivable at $C$, then $C\cdot\delta$ is a Jordan left derivation.
\end{theorem}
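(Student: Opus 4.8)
The plan is to reduce to the proof of Theorem \ref{nonzero_THC}: the point is that, since $C$ is central, Jordan left derivability at $C$ produces exactly the identity on which that proof rests. First I would note that for every invertible $A$ in $\mathcal{A}$, as $C\in\mathcal{Z}(\mathcal{A})$ we have $A^{-1}CA=C$, and therefore
\[
A\circ\Big(\tfrac{1}{2}A^{-1}C\Big)=\tfrac{1}{2}C+\tfrac{1}{2}A^{-1}CA=C.
\]
Applying the hypothesis that $\delta$ is Jordan left derivable at $C$ to this decomposition, together with linearity of $\delta$, gives
\[
\delta(C)=2A\,\delta\Big(\tfrac{1}{2}A^{-1}C\Big)+2\cdot\tfrac{1}{2}A^{-1}C\,\delta(A)=A\delta(A^{-1}C)+A^{-1}C\delta(A)
\]
for every invertible $A$; applying it to $I\circ\big(\tfrac{1}{2}C\big)=C$ gives $C\delta(I)=0$. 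These are precisely the two facts isolated at the start of the proof of Theorem \ref{nonzero_THC}.

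From here I would run the argument of Theorem \ref{nonzero_THC} without change. The identity $\delta(C)=A\delta(A^{-1}C)+A^{-1}C\delta(A)$ yields exactly (\ref{nonzero_THC_1}) and (\ref{nonzero_THC_2}); then, taking $T\in\mathcal{A}$, $n\in\mathbb{N}^{+}$ with $n\geq\|T\|+1$, and $B=nI+T$ (so that $B$ and $I-B$ are both invertible), the same telescoping computation produces $\delta(BC)=B\delta(C)+C\delta(B)$, hence (\ref{nonzero_THC_4}), namely $\delta(TC)=T\delta(C)+C\delta(T)$ for every $T$ in $\mathcal{A}$. Feeding $T=A^{-1}$ back into $\delta(C)=A\delta(A^{-1}C)+A^{-1}C\delta(A)$ then gives $AC\delta(A^{-1})+A^{-1}C\delta(A)=0=C\delta(I)$ for every invertible $A$.

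Finally, set $\tilde{\delta}=C\cdot\delta$. Since $C$ is central, the last equality reads $A\tilde{\delta}(A^{-1})+A^{-1}\tilde{\delta}(A)=\tilde{\delta}(I)$ for every invertible $A$, so Lemma \ref{nonzero_lemma1} shows that $\tilde{\delta}$ is a Jordan left derivation, which is the assertion.

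I expect the only genuinely new point to be the opening observation: the centrality of $C$ is exactly what makes $C=A\circ(\tfrac{1}{2}A^{-1}C)$ a valid Jordan decomposition, and this is what converts the Jordan hypothesis into the ordinary-product identity already exploited in Theorem \ref{nonzero_THC}. After that the proof is essentially a transcription; the one thing worth verifying is that the earlier argument never invokes left derivability at any product other than $A\cdot(A^{-1}C)=C$, and a quick inspection confirms this.
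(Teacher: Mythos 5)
Your proposal is correct and takes essentially the same route as the paper's own (sketched) proof: the paper likewise uses $I\circ\bigl(\tfrac12 C\bigr)=C$ to get $C\delta(I)=0$ and the centrality of $C$ to turn $A\circ\bigl(\tfrac12 A^{-1}C\bigr)=C$ into $\delta(C)=A\delta(A^{-1}C)+A^{-1}C\delta(A)$, then reruns the computation of Theorem \ref{nonzero_THC} and applies Lemma \ref{nonzero_lemma1} to $\tilde{\delta}=C\cdot\delta$. Nothing essential is missing.
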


\begin{proof}[\bf Proof]
The proof is similar as the proof of Theorem \ref{nonzero_THC}. To complete, we give the brief sketch of the proof in the following.

Suppose that for each $A,B$ in $\mathcal{A}$ with $A\circ B=C$, $\delta(C)=2A\delta(B)+2B\delta(A)$. Take $A=I$ and $2B=C$, it's easy to obtain that $C\delta(I)=0$. Let $A$ be invertible in $\mathcal{A}$. Since $C\in\mathcal{Z}(\mathcal{A})$ and $2\cdot\delta(C)=\delta(A\cdot(A^{-1}C)+(A^{-1}C)\cdot A)$, we have that $\delta(C)=A\delta(A^{-1}C)+A^{-1}C\delta(A)$. So
  \begin{eqnarray}
    A^{-1}C\delta(A)=\delta(C)-A\delta(A^{-1}C), \label{nonzero_THCJ_2}\\
    \delta(A^{-1}C)=A^{-1}\delta(C)-A^{-2}C\delta(A). \label{nonzero_THCJ_3}
  \end{eqnarray}
Let $T\in\mathcal{A}$, $n\in\mathbb{N}^{+}$ with $n\geq\|T\|+1$, and $B=nI+T$. Then $B$ and $(I-B)$ are both invertible in $\mathcal{A}$. Using (\ref{nonzero_THCJ_2}) and (\ref{nonzero_THCJ_3}), through a series of calculations, we obtain that
$$B^{-1}\delta(BC)=\delta(C)+B^{-1}C\delta(B).$$
Thus, $\delta(BC)=B\cdot B^{-1}\delta(BC)=B\delta(C)+C\delta(B)$. Since $B=nI+T$ and $C\delta(I)=0$, we obtain that $\delta(TC)=T\delta(C)+C\delta(T)$ for each $T$ in $\mathcal{A}$. Then for each invertible element $A$ in $\mathcal{A}$, $\delta(C)=A\delta(A^{-1}C)+A^{-1}C\delta(A)=\delta(C)+AC\delta(A^{-1})+A^{-1}C\delta(A)$, that is, $AC\delta(A^{-1})+A^{-1}C\delta(A)=0=C\delta(I)$. Let $\tilde{\delta}=C\cdot\delta$. By Lemma \ref{nonzero_lemma1}, we have that $\tilde{\delta}$ is a Jordan left derivation.
\end{proof}

By Theorems \ref{nonzero_THC} and \ref{nonzero_THCJ}, it's not difficult to show the following results.

\begin{theorem}\label{nonzero_THI}
Let $\delta:\mathcal{A}\rightarrow\mathcal{M}$ be a linear mapping. Then the following conditions are equivalent:
\vspace{-1ex}
\begin{enumerate}[\indent $\mathrm{(}i\mathrm{)}$]
  \item $\delta$ is a Jordan left derivation,
  \item $\delta$ is left derivable at $I$,
  \item $\delta$ is Jordan left derivable at $I$.
\end{enumerate}
\end{theorem}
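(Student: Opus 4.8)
The plan is to prove the cycle of implications $(ii)\Rightarrow(i)\Rightarrow(iii)\Rightarrow(i)$ together with $(i)\Rightarrow(ii)$, which together give the equivalence of the three conditions. Two of these implications are immediate consequences of the preceding theorems, one is a triviality, and only $(i)\Rightarrow(ii)$ carries any real content.

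For $(ii)\Rightarrow(i)$, I would apply Theorem \ref{nonzero_THC} with $C=I$: since $I\neq 0$ and $\mathcal{M}$ is a unital left $\mathcal{A}$-module we have $I\cdot\delta=\delta$, so the theorem says precisely that $\delta$ is a Jordan left derivation. Likewise, for $(iii)\Rightarrow(i)$ I would apply Theorem \ref{nonzero_THCJ} with $C=I$, noting that $I$ is a non-zero element of $\mathcal{Z}(\mathcal{A})$, again obtaining that $\delta=I\cdot\delta$ is a Jordan left derivation. The implication $(i)\Rightarrow(iii)$ needs no argument at all: if $\delta$ is a Jordan left derivation then $\delta(A\circ B)=2A\delta(B)+2B\delta(A)$ holds for \emph{all} $A,B\in\mathcal{A}$, hence in particular for all pairs $A,B$ with $A\circ B=I$.

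The one implication that must be worked out is $(i)\Rightarrow(ii)$. Assume $\delta$ is a Jordan left derivation. Putting $A=B$ in the defining identity gives $\delta(E^2)=2E\delta(E)$ for every $E$; so for any idempotent $E$ one has $\delta(E)=\delta(E^2)=2E\delta(E)$, and left-multiplying by $E$ yields $E\delta(E)=2E^2\delta(E)=2E\delta(E)$, whence $E\delta(E)=0$ and therefore $\delta(E)=0$. In particular $\delta(I)=0$. Now fix $A,B\in\mathcal{A}$ with $AB=I$; the key observation is that $P:=BA$ is an idempotent, since $P^2=B(AB)A=BA=P$, so by the previous remark $\delta(BA)=\delta(P)=0$. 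Since $A\circ B=AB+BA=I+P$ and $\delta(I)=\delta(P)=0$, applying the Jordan identity to this pair gives $0=\delta(A\circ B)=2A\delta(B)+2B\delta(A)$, that is $\delta(AB)=\delta(I)=0=A\delta(B)+B\delta(A)$. Hence $\delta$ is left derivable at $I$, completing the cycle.

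I do not expect a serious obstacle; the only point worth flagging is that $AB=I$ does not force $BA=I$ in a general Banach algebra, so one cannot simply invoke invertibility of $A$ and must instead argue through the idempotent $P=BA$ — which is exactly what makes the identity $\delta(P)=0$ available. Everything else is a direct application of Theorems \ref{nonzero_THC} and \ref{nonzero_THCJ}.
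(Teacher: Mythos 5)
Your proposal is correct, and the overall architecture (the two reverse implications via Theorems \ref{nonzero_THC} and \ref{nonzero_THCJ} with $C=I$, the triviality of $(i)\Rightarrow(iii)$, and $(i)\Rightarrow(ii)$ as the only substantive step) coincides with the paper's. However, your proof of $(i)\Rightarrow(ii)$ genuinely differs from the paper's. The paper starts from $\delta(BA)=\delta(A\circ B)=2A\delta(B)+2B\delta(A)$ and then expands $\delta(B^2)=\delta(B(BA)B)$ using the identity $\delta(XYX)=X^2\delta(Y)+3XY\delta(X)-YX\delta(X)$ for Jordan left derivations (invoked without proof), obtaining $3B\delta(B)=5B^2A\delta(B)+2B^3\delta(A)$ and then left-multiplying by $A^2$ and using $A^2B=A$, $A^2B^2=I$ to force $A\delta(B)+B\delta(A)=0$. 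You instead prove the small lemma that a Jordan left derivation annihilates every idempotent ($\delta(E)=2E\delta(E)$, and left multiplication by $E$ gives $E\delta(E)=2E\delta(E)$, hence $E\delta(E)=0$ and $\delta(E)=0$), observe that $P=BA$ is idempotent whenever $AB=I$, and conclude from $A\circ B=I+P$ and $\delta(I)=\delta(P)=0$ that $2A\delta(B)+2B\delta(A)=\delta(A\circ B)=0$. Your route is more elementary and self-contained, since it avoids the $\delta(XYX)$ formula and the algebraic bookkeeping with powers of $A$ and $B$; what the paper's computation buys is that it stays entirely within direct manipulations of the defining identity, in the same style as its Lemma \ref{nonzero_lemma1} and Theorem \ref{nonzero_THC}. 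You also correctly flag that $AB=I$ need not give $BA=I$, which is exactly why the idempotent $P=BA$ (rather than invertibility of $A$) is the right object; both proofs implicitly hinge on this point.
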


\begin{proof}[\bf Proof]
  we only need to prove ($i$)$\Rightarrow$($ii$). If $\delta$ is a Jordan left derivation, then by $\delta(I)=2\delta(I)$, we have $\delta(I)=0.$
  For each $A,B$ in $\mathcal{A}$ with $AB=I$, we have $\delta(BA)=\delta(A\circ B)=2A\delta(B)+2B\delta(A)$. By
  \begin{align*}
    2B\delta(B)=&\delta(B^2)=\delta(B(BA)B) \\
    =&B^2\delta(BA)+3B(BA)\delta(B)-(BA)B\delta(B) \\
    =&B^2(2A\delta(B)+2B\delta(A))+3B^2A\delta(B)-B\delta(B) \\
    =&5B^2A\delta(B)+2B^3\delta(A)-B\delta(B),
  \end{align*}
  we have $3B\delta(B)=5B^2A\delta(B)+2B^3\delta(A)$. Then
  \begin{align*}
    3A\delta(B)=&3A^2\cdot B\delta(B)=A^2\cdot(5B^2A\delta(B)+2B^3\delta(A)) \\
    =&5A\delta(B)+2B\delta(A).
  \end{align*}
  Thus, $A\delta(B)+B\delta(A)=0=\delta(AB)$ for each $A,B$ in $\mathcal{A}$ with $AB=I$. That is, $\delta$ is left derivable at $I$.
\end{proof}

\begin{corollary}\label{nonzero_CoXY}
Let $\delta:\mathcal{A}\rightarrow\mathcal{M}$ be a linear mapping, and $X,Y$ be elements in $\mathcal{A}$ with $X+Y=I$. If $\delta$ is left derivable at $X$ and $Y$, then $\delta$ is a Jordan left derivation.\\
\indent In addition, when $X,Y\in\mathcal{Z}(\mathcal{A})$, then $\delta$ is a Jordan left derivation if and only if $\delta$ is Jordan left derivable at $X$ and $Y$.
\end{corollary}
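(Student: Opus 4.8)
The plan is to reduce the corollary to the theorems already established, using that the set of Jordan left derivations from $\mathcal{A}$ into $\mathcal{M}$ is a linear space and that for $W$ in $\mathcal{A}$ the mapping $W\cdot\delta\colon A\mapsto W\delta(A)$ is the natural object to which Theorems \ref{nonzero_THC} and \ref{nonzero_THCJ} apply.

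For the first assertion, first dispose of the degenerate case: if $X=0$ (resp.\ $Y=0$) then $Y=I$ (resp.\ $X=I$), and the hypothesis that $\delta$ is left derivable at $I$ gives, by Theorem \ref{nonzero_THI}, that $\delta$ is a Jordan left derivation. So assume $X$ and $Y$ are both non-zero. Applying Theorem \ref{nonzero_THC} to $C=X$ and to $C=Y$ shows that both $X\cdot\delta$ and $Y\cdot\delta$ are Jordan left derivations; that is, for all $A,B$ in $\mathcal{A}$ one has $X\delta(A\circ B)=2AX\delta(B)+2BX\delta(A)$ and $Y\delta(A\circ B)=2AY\delta(B)+2BY\delta(A)$. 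Adding these two identities and using $X+Y=I$ together with the fact that $\mathcal{M}$ is unital yields $\delta(A\circ B)=2A\delta(B)+2B\delta(A)$ for all $A,B$, i.e.\ $\delta$ is a Jordan left derivation.

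For the second assertion, the forward implication is immediate: a Jordan left derivation satisfies $\delta(A\circ B)=2A\delta(B)+2B\delta(A)$ for every pair $A,B$, in particular for those with $A\circ B=X$ and those with $A\circ B=Y$, so $\delta$ is Jordan left derivable at $X$ and at $Y$. For the converse, run the argument of the first assertion with Theorem \ref{nonzero_THCJ} in place of Theorem \ref{nonzero_THC} --- this is legitimate because $X,Y\in\mathcal{Z}(\mathcal{A})$ --- again treating the case $X=0$ or $Y=0$ via Theorem \ref{nonzero_THI} (note $I\in\mathcal{Z}(\mathcal{A})$), and sum the two resulting Jordan-left-derivation identities as before.

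There is essentially no real obstacle here; the only points requiring a moment's care are the degenerate case in which one of $X,Y$ is zero, and the observation that adding the identities for $X\cdot\delta$ and $Y\cdot\delta$ recovers the identity for $\delta$ precisely because $X+Y=I$ and $\mathcal{M}$ is a unital left $\mathcal{A}$-module. The step one has to \emph{notice}, rather than compute, is that $X\cdot\delta$ and $Y\cdot\delta$ --- not $\delta$ itself --- are the mappings to which the earlier theorems apply directly.
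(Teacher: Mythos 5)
Your proof is correct and follows exactly the route the paper intends (the corollary is left unproved there as an easy consequence of Theorems \ref{nonzero_THC}, \ref{nonzero_THCJ} and \ref{nonzero_THI}): apply those theorems to get that $X\cdot\delta$ and $Y\cdot\delta$ are Jordan left derivations and add, using $X+Y=I$ and unitality of $\mathcal{M}$. Your handling of the degenerate case $X=0$ or $Y=0$ via Theorem \ref{nonzero_THI} is a sensible extra precaution, since Theorems \ref{nonzero_THC} and \ref{nonzero_THCJ} are stated only for non-zero $C$.
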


\begin{theorem}\label{nonzero_THW}
Let $\delta:\mathcal{A}\rightarrow\mathcal{M}$ be a linear mapping, and $W$ in $\mathcal{Z}(\mathcal{A})$ be a left separating point of $\mathcal{M}$. Then the following conditions are equivalent:
\vspace{-1ex}
\begin{enumerate}[\indent $\mathrm{(}i\mathrm{)}$]
  \item $\delta$ is a Jordan left derivation,
  \item $\delta$ is left derivable at $W$,
  \item $\delta$ is Jordan left derivable at $W$.
\end{enumerate}
\end{theorem}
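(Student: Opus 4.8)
The plan is to prove the four implications (ii)$\Rightarrow$(i), (iii)$\Rightarrow$(i), (i)$\Rightarrow$(iii) and (i)$\Rightarrow$(ii); together these give all the equivalences, and only the last one requires real work. Throughout one may assume $W\neq 0$, for if $W=0$ is a left separating point then $\mathcal{M}=\{0\}$ and all three conditions hold trivially; in particular Theorems \ref{nonzero_THC} and \ref{nonzero_THCJ} apply at $W$.

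For (ii)$\Rightarrow$(i): by Theorem \ref{nonzero_THC}, $W\cdot\delta$ is a Jordan left derivation, so for each $A$ in $\mathcal{A}$ we get $W\delta(A^2)=2A\bigl(W\delta(A)\bigr)=2(AW)\delta(A)=2(WA)\delta(A)=W\bigl(2A\delta(A)\bigr)$, where centrality of $W$ is used to move it past $A$. Hence $W\bigl(\delta(A^2)-2A\delta(A)\bigr)=0$, and as $W$ is a left separating point of $\mathcal{M}$ this yields $\delta(A^2)=2A\delta(A)$, i.e. $\delta$ is a Jordan left derivation. The implication (iii)$\Rightarrow$(i) is identical, with Theorem \ref{nonzero_THCJ} (which needs $W\in\mathcal{Z}(\mathcal{A})$) in place of Theorem \ref{nonzero_THC}. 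And (i)$\Rightarrow$(iii) is trivial, since a Jordan left derivation satisfies $\delta(A\circ B)=2A\delta(B)+2B\delta(A)$ for all $A,B$, in particular whenever $A\circ B=W$.

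The heart of the proof is (i)$\Rightarrow$(ii). Assume $\delta$ is a Jordan left derivation and fix $A,B$ with $AB=W$. First I would collect the identities available for any Jordan left derivation: $\delta(X^2)=2X\delta(X)$; the triple-product identity $\delta(XYX)=X^2\delta(Y)+3XY\delta(X)-YX\delta(X)$, obtained as in the proof of Theorem \ref{nonzero_THI} from $2XYX=X\circ(X\circ Y)-X^2\circ Y$; and, since $W$ is central, $\delta(TW)=T\delta(W)+W\delta(T)$ for all $T$ (apply the defining identity to $T$ and $W$ and use $TW+WT=2TW$). Next, I would use that $AB=W$ is central to collapse products: $ABA=WA$, $BAB=WB$, $(BA)B=WB$, $A^2B=WA$, $A^2B^2=W^2$, $A^2B^2A=W^2A$, $A^2B^3=W^2B$. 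Evaluating $\delta\bigl(B(BA)B\bigr)=\delta(WB^2)$ in two ways — once via the triple-product identity with $X=B$, $Y=BA$, once via $\delta(TW)=T\delta(W)+W\delta(T)$ with $T=B^2$ — gives $B^2\delta(BA)=B^2\delta(W)+3WB\delta(B)-3B^2A\delta(B)$. Substituting $\delta(BA)=2A\delta(B)+2B\delta(A)-\delta(W)$ (from the defining identity applied to $A\circ B=W+BA$) and simplifying yields $5B^2A\delta(B)+2B^3\delta(A)=2B^2\delta(W)+3WB\delta(B)$. Multiplying this relation on the left by $A^2$ and invoking the collapsings above produces $W^2\bigl(\delta(W)-A\delta(B)-B\delta(A)\bigr)=0$. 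Since $W^2$ is again a left separating point of $\mathcal{M}$ (if $W^2m=0$ then $W(Wm)=0$, so $Wm=0$, so $m=0$), we conclude $\delta(W)=A\delta(B)+B\delta(A)$; thus $\delta$ is left derivable at $W$.

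The main obstacle is precisely this (i)$\Rightarrow$(ii) computation. In the case $W=I$ (Theorem \ref{nonzero_THI}) products of $A$ and $B$ collapse all the way to single letters; here they collapse only to powers of $W$, so the extra $\delta(W)$-terms must be tracked faithfully and the calculation arranged so that the residual obstruction is annihilated by $W^2$ rather than being zero on the nose — only then can the separating-point hypothesis finish it off. Choosing which triple products to expand and on which side to multiply by $A^2$ is the delicate point; the remaining implications are routine consequences of Theorems \ref{nonzero_THC} and \ref{nonzero_THCJ} plus the remark that powers of a left separating point are again left separating points.
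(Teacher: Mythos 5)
Your proposal is correct and follows essentially the same route as the paper: the implications (ii)$\Rightarrow$(i) and (iii)$\Rightarrow$(i) via Theorems \ref{nonzero_THC} and \ref{nonzero_THCJ} plus the separating-point and centrality of $W$ are exactly the paper's argument, and your (i)$\Rightarrow$(ii) uses the same Jordan triple-product identity, collapsing by $AB=W\in\mathcal{Z}(\mathcal{A})$, left multiplication by a power of $A$, and the separating hypothesis. The only (harmless) variation is in the key step: the paper expands $\delta(BW)=\delta(BAB)$ directly and multiplies by $A$ once, so that $W$ itself separates, whereas you expand $\delta(WB^2)=\delta(B(BA)B)$ (as in Theorem \ref{nonzero_THI}), multiply by $A^2$, and correctly observe that $W^2$ is again a left separating point.
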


\begin{proof}[\bf Proof]
  It's obvious that $\delta$ is Jordan left derivable at $W$ if $\delta$ is a Jordan left derivation. We only need to prove ($i$)$\Rightarrow$($ii$), ($ii$)$\Rightarrow$($i$) and ($iii$)$\Rightarrow$($i$).

  If $\delta$ is left derivable at $W$, or if $\delta$ is Jordan left derivable at $W$, then by Theorems \ref{nonzero_THC} and \ref{nonzero_THCJ}, $W\cdot\delta$ is Jordan left derivation. For each $A$ in $\mathcal{A}$, $$W\delta(A^2)=2AW\delta(A).$$ Since $W\in\mathcal{Z}(\mathcal{A})$, we have $W\delta(A^2)=W\cdot 2A\delta(A)$. Since $W$ is a left separating point of $\mathcal{M}$, we obtain that $\delta(A^2)=2A\delta(A)$ for each $A$ in $\mathcal{A}$. That is, $\delta$ is a Jordan left derivation.

  If $\delta$ is a Jordan left derivation, then for each $B$ in $\mathcal{A}$, by $BW=WB$,
  $$\delta(BW)=\delta(WB)=B\delta(W)+W\delta(B).$$
  On the other hand, for each $A,B$ in $\mathcal{A}$ with $AB=W$, we have that $$\delta(BW)=\delta(BAB)=B^2\delta(A)+3BA\delta(B)-AB\delta(B).$$
  Thus
  $$B\delta(AB)+2AB\delta(B)=B^2\delta(A)+3BA\delta(B)$$
  for each $A,B$ in $\mathcal{A}$ with $AB=W$. Multiply the equation by $A$ at the left side,
  $$AB\delta(AB)+2AAB\delta(B)=AB^2\delta(A)+3ABA\delta(B).$$
  Since $W=AB\in\mathcal{Z}(\mathcal{A})$, we have $W\delta(AB)=WB\delta(A)+WA\delta(B)$. And since $W$ is a left separating point of $\mathcal{M}$, we obtain that $\delta(AB)=B\delta(A)+A\delta(B)$ for each $A,B$ in $\mathcal{A}$ with $AB=W$, i.e., $\delta$ is left derivable at $W$.
\end{proof}

By \cite{Li Zhou OM, Vukman ring Banach-algebra}, if $\mathcal{A}$ is a CSL algebra or a unital semisimple Banach algebra, then  every continuous Jordan left derivation on $\mathcal{A}$ is zero. So from Theorems \ref{nonzero_THC} and \ref{nonzero_THCJ} we have the following corollaries.

\begin{corollary}\label{nonzero_CSL_semisimpleI}
Let $\mathcal{A}$ be a CSL algebra or a unital semisimple Banach algebra, and $\delta$ be a continuous linear mapping on $\mathcal{A}$. If $\delta$ is left derivable at $I$, or if $\delta$ is Jordan left derivable at $I$, then $\delta=0$.
\end{corollary}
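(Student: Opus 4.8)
The plan is to obtain the conclusion in two short steps, both of which are already at our disposal. In the first step I would view $\mathcal{A}$ as a unital left module over itself and apply Theorem \ref{nonzero_THI} with $\mathcal{M}=\mathcal{A}$: the hypothesis that $\delta$ is left derivable at $I$, or Jordan left derivable at $I$, forces $\delta$ to be a Jordan left derivation. One can just as well invoke Theorem \ref{nonzero_THC} or Theorem \ref{nonzero_THCJ} with $C=I$ and note that $I\cdot\delta=\delta$. It is worth stressing that continuity of $\delta$ plays no role in this step.

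In the second step I would bring in the continuity hypothesis together with the rigidity results quoted just before the corollary: by \cite{Li Zhou OM, Vukman ring Banach-algebra}, every continuous Jordan left derivation on a CSL algebra, and every continuous Jordan left derivation on a unital semisimple Banach algebra, is the zero map. Applying this to the Jordan left derivation produced in Step 1 gives $\delta=0$, which is the assertion.

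The only point that needs a word of justification is that both classes of algebras fit the standing hypotheses of Section \ref{nonzero}, namely unitality: a unital semisimple Banach algebra is unital by definition, and a CSL algebra $\mathrm{Alg}\,\mathcal{L}$ contains the identity operator because $I$ leaves every element of the lattice $\mathcal{L}$ invariant, so one may legitimately set $\mathcal{M}=\mathcal{A}$ and apply Theorem \ref{nonzero_THI}. Beyond this bookkeeping there is no genuine obstacle; the corollary is essentially immediate once Theorems \ref{nonzero_THC}--\ref{nonzero_THI} and the cited vanishing theorems for continuous Jordan left derivations are in place, so the actual write-up amounts to little more than citing them in the right order.
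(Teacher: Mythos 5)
Your proposal is correct and follows exactly the route the paper intends: apply Theorem \ref{nonzero_THI} (equivalently Theorems \ref{nonzero_THC} and \ref{nonzero_THCJ} with $C=I$, noting $I\cdot\delta=\delta$ and that $I\in\mathcal{Z}(\mathcal{A})$) to see that $\delta$ is a Jordan left derivation, and then invoke the cited results that every continuous Jordan left derivation on a CSL algebra or a unital semisimple Banach algebra vanishes. Your remark on unitality of both classes of algebras is the only bookkeeping needed, and it is handled correctly.
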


\begin{corollary}\label{nonzero_CSL_semisimpleW}
Let $\mathcal{A}$ be as in Corollary \ref{nonzero_CSL_semisimpleI}, $\delta$ be a continuous linear mapping on $\mathcal{A}$, and $W$ in $\mathcal{A}$ be a left separating point of $\mathcal{A}$. If $\delta$ is left derivable at $W$, then $\delta=0$.\\
\indent In addition, when $W\in\mathcal{Z}(\mathcal{A})$, if $\delta$ is Jordan left derivable at $W$, then $\delta=0$.
\end{corollary}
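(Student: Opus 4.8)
The plan is to derive both parts of the corollary directly from Theorems \ref{nonzero_THC} and \ref{nonzero_THCJ} together with the quoted fact that every continuous Jordan left derivation on a CSL algebra or a unital semisimple Banach algebra vanishes; the left separating hypothesis on $W$ will then be used to pull the conclusion back from $W\cdot\delta$ to $\delta$ itself.

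First I would record two trivial preliminaries. A CSL algebra and a unital semisimple Banach algebra are both unital, and $\mathcal{A}$ is a unital left module over itself, so the standing assumptions of Section \ref{nonzero} apply with $\mathcal{M}=\mathcal{A}$. Moreover $W\neq 0$: otherwise $WM=0$ for every $M\in\mathcal{A}$, and the left separating property would force $\mathcal{A}=\{0\}$, a trivial case we may exclude.

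For the first assertion, suppose $\delta$ is left derivable at $W$. By Theorem \ref{nonzero_THC}, $\tilde{\delta}:=W\cdot\delta$ is a Jordan left derivation from $\mathcal{A}$ into $\mathcal{A}$; since $\delta$ is continuous and left multiplication by $W$ is bounded, $\tilde{\delta}$ is continuous. Hence $\tilde{\delta}=0$ by the cited results of \cite{Li Zhou OM, Vukman ring Banach-algebra}, i.e. $W\delta(A)=0$ for every $A\in\mathcal{A}$; as $W$ is a left separating point of $\mathcal{A}$, this gives $\delta(A)=0$ for all $A$, so $\delta=0$. For the second assertion, when in addition $W\in\mathcal{Z}(\mathcal{A})$ and $\delta$ is Jordan left derivable at $W$, Theorem \ref{nonzero_THCJ} again shows that $\tilde{\delta}=W\cdot\delta$ is a continuous Jordan left derivation on $\mathcal{A}$, hence $\tilde{\delta}=0$, and the same left separating argument yields $\delta=0$.

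I do not expect any genuine obstacle: every step is an invocation of an already-established result. The only points deserving a line of justification are the boundedness of $W\cdot\delta$ and the observation $W\neq 0$, both immediate; one should also note that the vanishing theorem is being applied to a Jordan left derivation valued in $\mathcal{A}$ itself, which is precisely the present setting with $\mathcal{M}=\mathcal{A}$.
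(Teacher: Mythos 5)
Your proposal is correct and follows exactly the route the paper intends: apply Theorems \ref{nonzero_THC} and \ref{nonzero_THCJ} to see that $W\cdot\delta$ is a (continuous) Jordan left derivation, invoke the cited vanishing result from \cite{Li Zhou OM, Vukman ring Banach-algebra}, and use the left separating property of $W$ to conclude $\delta=0$. The small points you flag (continuity of $W\cdot\delta$ and $W\neq 0$) are indeed the only details worth noting, and they are handled correctly.
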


\begin{remark}
Let $\mathcal{A}$ be a unital Banach algebra, $\mathcal{M}$ be a unital Banach left $\mathcal{A}$-module, and $W$ in $\mathcal{Z(A)}$ be a left separating point of $\mathcal{M}$. Fadaee and Ghahramani \cite{Fadaee Ghahramani reflexive id} prove that a continuous linear mapping $\delta:\mathcal{A}\rightarrow\mathcal{M}$ is a Jordan left derivation if $A\delta(A^{-1})+A^{-1}\delta(A)=0$ for each invertible element $A$ in $\mathcal{A}$, or if $\delta$ is Jordan left derivable at $I$. Ghahramani\cite{Ghahramani reflexive id} prove that every continuous linear mapping $\delta:\mathcal{A}\rightarrow\mathcal{M}$ is a Jordan left derivation if $\delta$ is left derivable at $I$. Ebadiana and Gordji\cite{Ebadiana Gordji} prove that a bounded linear mapping $\delta:\mathcal{A}\rightarrow\mathcal{M}$ is a Jordan left derivation if $\delta(AB)=A\delta(B)+B\delta(A)$ for each $A,B$ in $\mathcal{A}$ with $AB=BA=W$.
In this Section, we improve the results in\cite{Ebadiana Gordji, Fadaee Ghahramani reflexive id, Ghahramani reflexive id} without assume that $\delta$ is bounded or continuous. Correspondingly, we conclude that every linear mapping $\delta:\mathcal{A}\rightarrow\mathcal{M}$ is a Jordan left derivation if $A\delta(A^{-1})+A^{-1}\delta(A)=0$ for each invertible element $A$ in $\mathcal{A}$, or if $\delta$ is (Jordan) left derivable at $I$ or $W$.

Let $\mathcal{R}$ be a 2-torsion free ring with identity $I$ which satisfies that for each $T$ in $\mathcal{R}$, there is some integer $n$ such that $nI-T$ and $(n-1)I-T$ are invertible or $nI+T$ and $(n+1)I+T$ are invertible. If we replace $\mathcal{A}$ with $\mathcal{R}$ and replace linear mappings with additive mappings, then all of the above results in this section are still true.
\end{remark}

\vspace{1em}
\section{(Jordan) left derivable mappings at zero} \label{B}

In this section, we consider continuous linear operators (Jordan) left derivable at zero. At first, we introduce a class of Banach algebras with the property ($\mathbb{B}$). Let $\mathcal{A}$ be a Banach algebra and $\phi$ be a continuous bilinear mapping from $\mathcal{A}\times\mathcal{A}$ into a Banach space $\mathcal{X}$. We say that $\phi$ \emph{preserves zero products} if for each $A,B$ in $\mathcal{A}$,
$$AB=0~\Rightarrow~\phi(A,B)=0.$$
Then the property ($\mathbb{B}$) is defined as follows.

\begin{definition}\cite{Alaminos Bresar others}\label{Prop_B}
A Banach algebra $\mathcal{A}$ is saied to have \emph{the property ($\mathbb{B}$)}, if for every Banach space $\mathcal{X}$ and every continuous bilinear mapping $\phi : \mathcal{A}\times\mathcal{A}\rightarrow\mathcal{X}$, $\phi$ preserving zero products implies that for each $A,B,C$ in $\mathcal{A}$, $$\phi(AB,C)=\phi(A,BC).$$
\end{definition}
The class of Banach algebras with the property ($\mathbb{B}$) includes $C^{\ast}$-algebras, group-algebras, unitary algebras, and Banach algebras generated by idempotents.

Let $\mathcal{A}$ be a Banach algebra, and $\mathcal{M}$ be a Banach left $\mathcal{A}$-module. $\mathcal{M}$ is called \emph{essential} if $\mathcal{M}=\overline{span\{AM:A\in\mathcal{A},M\in\mathcal{M}\}}$. Let $\mathcal{M}^{\ast}$ be the dual space of $\mathcal{M}$. For each $A$ in $\mathcal{A}$ and each $M^{\ast}$ in $\mathcal{M}^{\ast}$, we define the module multiplication $M^{\ast}A$ in $\mathcal{M}^{\ast}$ by
$$\langle M, M^{\ast}A\rangle=\langle AM, M^{\ast}\rangle\quad(\forall M\in\mathcal{M}).$$
Then $\mathcal{M}^{\ast}$ is a Banach right $\mathcal{A}$-module called the \emph{dual} Banach right $\mathcal{A}$-module. Similarly each Banach right $\mathcal{A}$-module has a dual Banach left $\mathcal{A}$-module.
Note that $\mathcal{M}^{\ast\ast}$ is a Banach left $\mathcal{A}$-module, and we may regard $\mathcal{M}$ as a closed submodule of $\mathcal{M}^{\ast\ast}$. In fact, the natural embedding $\iota_{\mathcal{M}} : \mathcal{M}\rightarrow\mathcal{M}^{\ast\ast}$ is a left $\mathcal{A}$-module homomorphism and the natural projection $\pi_{\mathcal{M}} : \mathcal{M}^{\ast\ast\ast}\rightarrow\mathcal{M}^{\ast}$ is a right $\mathcal{A}$-module homomorphism. $\pi_{\mathcal{M}}$ is the dual operator $\iota_{\mathcal{M}}^{\ast}$ of $\iota_{\mathcal{M}}$.

Next, we consider the following conditions on a continuous linear mapping $\delta$ from a Banach algebra $\mathcal{A}$ into a Banach left $\mathcal{A}$-module $\mathcal{M}$:
\vspace{-1ex}
\begin{enumerate}[\indent (D1)]
  \item for each $A,B,C$ in $\mathcal{A},~AB=BC=0~\Rightarrow~AC\delta(B)=0,$
  \item for each $A,B$ in $\mathcal{A},~AB=0~\Rightarrow~A\delta(B)+B\delta(A)=0,$
  \item for each $A,B$ in $\mathcal{A},~AB=BA=0~\Rightarrow~A\delta(B)+B\delta(A)=0,$
  \item for each $A,B$ in $\mathcal{A},~A\circ B=0~\Rightarrow~A\delta(B)+B\delta(A)=0.$
\end{enumerate}

Similar to the definition of generalized derivations in \cite{Alaminos Bresar others}, generalized (Jordan) left derivations are defined as follows.
\begin{definition}\label{generalized_(J)LD}
Let $\delta$ be a linear mapping from a Banach algebra $\mathcal{A}$ into a Banach left $\mathcal{A}$-module $\mathcal{M}$. Then $\delta$ is called
\vspace{-0.5ex}
\begin{enumerate}[($i$)]
  \item a \emph{generalized left derivation}, if there exists an element $\xi$ in $\mathcal{M}^{\ast\ast}$, such that for each $A,B$ in $\mathcal{A}$,
      $$\delta(AB)=A\delta(B)+B\delta(A)-AB\xi.$$
  \item a \emph{generalized Jordan left derivation}, if there exists an element $\xi$ in $\mathcal{M}^{\ast\ast}$, such that for each $A,B$ in $\mathcal{A}$,
      $$\delta(A\circ B)=2A\delta(B)+2B\delta(A)-(A\circ B)\xi.$$
\end{enumerate}
\end{definition}
\vspace{1ex}
\begin{remark}
If $\delta$ is a generalized left derivation, then there exists a mapping $d:\mathcal{A}\rightarrow\mathcal{M}^{\ast\ast}$ satisfying $d(A)=\delta(A)-A\xi$ for each $A$ in $\mathcal{A}$. We denote that $R_{\xi}(A)=A\xi$ for each $A$ in $\mathcal{A}$, then
$$\delta=d+R_{\xi}.$$
We cannot confirm that $d$ is a left derivation since $\mathcal{A}$ is not necessarily commutative.
But if $\delta$ is a generalized Jordan left derivation, and $d:\mathcal{A}\rightarrow\mathcal{M}^{\ast\ast}$ is a mapping satisfying
$$\delta=d+R_{\xi},$$
then $d$ is a Jordan left derivation.
\end{remark}

In the next four propositions, we establish several sufficient conditions which imply that $\xi$ can be chosen in $\mathcal{M}$.

\begin{proposition}\label{B_prop1}
Let $\mathcal{A}$ be a Banach algebra with identity $I$, $\mathcal{M}$ be an arbitrary Banach left $\mathcal{A}$-module, and $\delta:\mathcal{A}\rightarrow\mathcal{M}$ be a generalized left derivation. Then there exists an element $\xi$ in $\mathcal{M}$ such that for each $A,B$ in $\mathcal{A}$,
$$\delta(AB)=A\delta(B)+B\delta(A)-AB\xi.$$
Furthermore,
\vspace{-1ex}
\begin{center}
 $\delta$ is a left derivation~~$\Leftrightarrow$~~$I\cdot\delta(I)=0$.
\end{center}
\end{proposition}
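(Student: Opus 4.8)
The plan is to exploit the fact that in the defining identity of a generalized left derivation the element of $\mathcal{M}^{\ast\ast}$ enters \emph{only} through the expression $AB\xi$, and to use the identity $I$ to pull that element into $\mathcal{M}$. So by Definition \ref{generalized_(J)LD} fix $\xi_0\in\mathcal{M}^{\ast\ast}$ with $\delta(AB)=A\delta(B)+B\delta(A)-AB\xi_0$ for all $A,B\in\mathcal{A}$. First I would put $A=B=I$, obtaining $\delta(I)=2I\delta(I)-I\xi_0$, hence $I\xi_0=2I\delta(I)-\delta(I)$. Since $\delta(I)$ and $I\delta(I)$ lie in $\mathcal{M}$ (regarded as a closed submodule of $\mathcal{M}^{\ast\ast}$ via the module homomorphism $\iota_{\mathcal{M}}$), this shows $I\xi_0\in\mathcal{M}$, and I set $\xi:=I\xi_0\in\mathcal{M}$.

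Next, using that $\mathcal{M}^{\ast\ast}$ is a left $\mathcal{A}$-module with associative action, for all $A,B\in\mathcal{A}$ we have $AB\xi_0=(AB\cdot I)\xi_0=AB(I\xi_0)=AB\xi$. Substituting this back into the defining identity gives $\delta(AB)=A\delta(B)+B\delta(A)-AB\xi$ with $\xi\in\mathcal{M}$, which is the first assertion. Note that $\mathcal{M}$ is not assumed to be unital, so one may not replace $I\xi_0$ by $\xi_0$; what makes the replacement legitimate is precisely that $\xi_0$ occurs only after left multiplication by elements of $\mathcal{A}$.

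For the displayed equivalence, the observation is that $\delta$ is a left derivation if and only if $AB\xi=0$ for all $A,B\in\mathcal{A}$, and since $\mathcal{A}$ is unital (take $B=I$, and conversely $AB\in\mathcal{A}$) this is equivalent to $C\xi=0$ for all $C\in\mathcal{A}$. If $I\delta(I)=0$, then $\xi=I\xi_0=-\delta(I)$, so $C\xi=-C\delta(I)=-(CI)\delta(I)=-C(I\delta(I))=0$ for every $C$, whence $\delta$ is a left derivation. Conversely, if $\delta$ is a left derivation then $C\xi=0$ for all $C$; taking $C=I$ and using $I\xi=I(I\xi_0)=(I\cdot I)\xi_0=I\xi_0=\xi$ gives $\xi=0$, i.e.\ $\delta(I)=2I\delta(I)$, and applying $I$ on the left together with $I(I\delta(I))=(I\cdot I)\delta(I)=I\delta(I)$ yields $I\delta(I)=2I\delta(I)$, hence $I\delta(I)=0$.

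The one point to be careful about — really the only subtlety in an otherwise short computation — is that $\mathcal{M}$ is an arbitrary left module, not necessarily unital, so every step must be routed through associativity of the module action and through the facts $I\xi_0\in\mathcal{M}$ and $I(IM)=IM$; the shortcut $IM=M$ is unavailable. Once this bookkeeping is respected, no further difficulty arises.
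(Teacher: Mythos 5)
Your proof is correct and follows essentially the same route as the paper: set $\xi=I\xi_0\in\mathcal{M}$ (the paper's $\xi=I\zeta$, which lies in $\mathcal{M}$ because all other terms of the defining identity do), use module associativity to rewrite $AB\xi_0=AB\xi$, and then derive the equivalence with $I\cdot\delta(I)=0$ by substituting $A=B=I$ and left-multiplying by $I$. Your extra care with the non-unitality of $\mathcal{M}$ (never replacing $IM$ by $M$) is exactly the point the paper handles implicitly, so no issues.
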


\begin{proof}[\bf Proof]
According to the definition of generalized left derivation, there exists an element $\zeta$ in $\mathcal{M}^{\ast\ast}$ such that for each $A,B$ in $\mathcal{A}$,
\begin{equation}\label{B_prop1_1}
\delta(AB)=A\delta(B)+B\delta(A)-AB\zeta.
\end{equation}
Note this entails that $AB\zeta\in\mathcal{M}$ for each $A,B$ in $\mathcal{A}$. Define $\xi=I\zeta\in\mathcal{M}$, then on account of (\ref{B_prop1_1}), we have that for each $A,B$ in $\mathcal{A}$,
\begin{equation}\label{B_prop1_2}
\delta(AB)=A\delta(B)+B\delta(A)-AB\xi.
\end{equation}

Furthermore, if $\delta$ is a left derivation, then $\delta(I)=I\cdot\delta(I)+I\cdot\delta(I)$, i.e., $I\cdot\delta(I)=0$. And if $I\cdot\delta(I)=0$, then taking $A=B=I$ in (\ref{B_prop1_2}), we have $\xi=I\cdot\delta(I)=0$. So $\delta(AB)=A\delta(B)+B\delta(A)$ for each $A,B$ in $\mathcal{A}$.
\end{proof}

According to Cohen's factorization theorem in \cite{Bonsall Duncan}, if a Banach algebra $\mathcal{A}$ has a bounded left approximate identity, then for each $C$ in $\mathcal{A}$, there exist elements $A,B$ in $\mathcal{A}$ such that $C=AB$.

\begin{proposition}\label{B_prop2}
Let $\mathcal{A}$ be a Banach algebra, $\mathcal{M}$ be a dual Banach left $\mathcal{A}$-module, and $\delta:\mathcal{A}\rightarrow\mathcal{M}$ be a generalized left derivation. Then there exists an element $\xi$ in $\mathcal{M}$ such that for each $A,B$ in $\mathcal{A}$,
$$\delta(AB)=A\delta(B)+B\delta(A)-AB\xi.$$
If $\mathcal{A}$ has a bounded approximate identity $(\rho_i)_{i\in \mathcal{I}}$ and $\delta$ is continuous, then
\begin{center}
$\mathrm{(}$i$\mathrm{)}~\delta$ is a left derivation~~$\Leftrightarrow~~\mathrm{(}$ii$\mathrm{)}~\lim\limits_{i\in \mathcal{I}}~C\cdot\delta(\rho_i)=0$~for~each~$C$~in~$\mathcal{A}$.
\end{center}
In addition, if $Rann_{\mathcal{M}}(\mathcal{A})=\{0\}$, and $\mathcal{M}_{\ast}$ is a predual right $\mathcal{A}$-module of $\mathcal{M}$, then
\begin{center}
$\mathrm{(}$i$\mathrm{)}~\Leftrightarrow~\mathrm{(}$ii$\mathrm{)}~\Leftrightarrow~\mathrm{(}$iii$\mathrm{)}~\sigma(\mathcal{M},\mathcal{M}_{\ast})\!-\!\lim\limits_{i\in \mathcal{I}}\delta(\rho_i)=0$.
\end{center}
\end{proposition}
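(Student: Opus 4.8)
The plan is to dispatch the three assertions in turn: the first is soft functional analysis, while both the equivalence (i)$\Leftrightarrow$(ii) and the ``in addition'' equivalences hinge on Cohen's factorization theorem together with a single norm-limit identity.

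\emph{Relocating $\xi$ into $\mathcal{M}$.} Fix a predual right $\mathcal{A}$-module $\mathcal{M}_{\ast}$, so that $\mathcal{M}=(\mathcal{M}_{\ast})^{\ast}$. By the right-module analogue of the facts recalled just before Definition \ref{generalized_(J)LD}, the natural projection $\pi:=\iota_{\mathcal{M}_{\ast}}^{\ast}\colon\mathcal{M}^{\ast\ast}\to\mathcal{M}$ is a left $\mathcal{A}$-module homomorphism satisfying $\pi\circ\iota_{\mathcal{M}}=\mathrm{id}_{\mathcal{M}}$. Given $\zeta\in\mathcal{M}^{\ast\ast}$ with $\delta(AB)=A\delta(B)+B\delta(A)-AB\zeta$ for all $A,B$, I would apply $\pi$ to this identity in $\mathcal{M}^{\ast\ast}$: since $\iota_{\mathcal{M}}$ is a module homomorphism, $\pi$ fixes $\delta(AB)$, $A\delta(B)$ and $B\delta(A)$, while $\pi(AB\zeta)=AB\,\pi(\zeta)$; hence $\xi:=\pi(\zeta)\in\mathcal{M}$ satisfies $\delta(AB)=A\delta(B)+B\delta(A)-AB\xi$. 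This is the step that genuinely uses the dual-module hypothesis, exactly as unitality was used in Proposition \ref{B_prop1}.

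\emph{The identity $\lim_{i}C\delta(\rho_i)=C\xi$ and (i)$\Leftrightarrow$(ii).} For $C\in\mathcal{A}$, Cohen's theorem provides $A,B$ with $C=AB$. Applying the generalized-left-derivation identity to the pair $(B,\rho_i)$ gives $B\delta(\rho_i)=\delta(B\rho_i)-\rho_i\delta(B)+B\rho_i\xi$, and multiplying on the left by $A$,
$$C\delta(\rho_i)=A\,\delta(B\rho_i)-A\rho_i\,\delta(B)+AB\rho_i\,\xi .$$
Letting $i$ run through the net and using the approximate-identity relations $B\rho_i\to B$, $A\rho_i\to A$, $AB\rho_i\to AB$, the continuity of $\delta$, and the norm-continuity of $m\mapsto Am$ and (now that $\xi\in\mathcal{M}$) of $X\mapsto X\xi$, the first two summands both converge to $A\delta(B)$ and cancel, leaving $\lim_{i}C\delta(\rho_i)=AB\xi=C\xi$. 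On the other hand, $\delta$ is a left derivation precisely when $AB\xi=0$ for all $A,B$, which by Cohen's theorem is the same as $C\xi=0$ for all $C$; comparing this with the displayed limit yields (i)$\Leftrightarrow$(ii) at once.

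\emph{The ``in addition'' part.} Here $Rann_{\mathcal{M}}(\mathcal{A})=\{0\}$ forces $\xi=0$ as soon as $C\xi=0$ for all $C$; moreover $Rann_{\mathcal{M}}(\mathcal{A})$ is the annihilator in $\mathcal{M}=(\mathcal{M}_{\ast})^{\ast}$ of the closed linear span $\mathcal{N}$ of $\mathcal{M}_{\ast}\mathcal{A}$, so $\mathcal{N}=\mathcal{M}_{\ast}$; and the left action of $\mathcal{A}$ on $\mathcal{M}$, being the adjoint of the right action on $\mathcal{M}_{\ast}$, is $\sigma(\mathcal{M},\mathcal{M}_{\ast})$-continuous. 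For (iii)$\Rightarrow$(ii): if $\delta(\rho_i)\to0$ weak$^{\ast}$ then $C\delta(\rho_i)\to0$ weak$^{\ast}$ for every $C$, while by the identity above $C\delta(\rho_i)\to C\xi$ in norm, so uniqueness of weak$^{\ast}$ limits gives $C\xi=0$, whence $\lim_{i}C\delta(\rho_i)=0$, i.e.\ (ii). For (i)$\Rightarrow$(iii): (i) gives $\xi=0$, hence $C\delta(\rho_i)\to0$ in norm for every $C$, so $\langle\mu,C\delta(\rho_i)\rangle=\langle\mu C,\delta(\rho_i)\rangle\to0$ for all $\mu\in\mathcal{M}_{\ast}$ and $C\in\mathcal{A}$; thus $\langle\mu,\delta(\rho_i)\rangle\to0$ for $\mu$ in the dense subspace $\mathcal{N}=\mathcal{M}_{\ast}$, and the boundedness of $(\delta(\rho_i))$ upgrades this to all of $\mathcal{M}_{\ast}$, which is (iii). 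I expect the only real difficulty to be bookkeeping: pinning down the module structures on $\mathcal{M}$, $\mathcal{M}^{\ast}$, $\mathcal{M}^{\ast\ast}$ and $\mathcal{M}_{\ast}$ so that $\pi$ is provably a left $\mathcal{A}$-module homomorphism fixing $\mathcal{M}$ pointwise, and making sure the norm-limit identity is invoked only after $\xi$ has been brought back into $\mathcal{M}$.
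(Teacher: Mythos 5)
Your proposal is correct and follows essentially the same route as the paper: project $\zeta$ into $\mathcal{M}$ via the natural projection $\pi_{\mathcal{M}_\ast}=\iota_{\mathcal{M}_\ast}^{\ast}$, use Cohen factorization and the approximate identity to establish $\lim_i C\delta(\rho_i)=C\xi$, and deduce the equivalences from the observation that (i) is equivalent to $C\xi=0$ for all $C$, together with the density of $\mathrm{span}\,\mathcal{M}_\ast\mathcal{A}$ in $\mathcal{M}_\ast$ under $Rann_{\mathcal{M}}(\mathcal{A})=\{0\}$ and the boundedness of $(\delta(\rho_i))$. The only difference is cosmetic: you close the cycle via (iii)$\Rightarrow$(ii) instead of the paper's direct (iii)$\Rightarrow$(i), which is an equivalent arrangement.
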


\begin{proof}[\bf Proof]
According to the definition of generalized left derivation, there exists an element $\zeta$ in $\mathcal{M}^{\ast\ast}$ such that for each $A,B$ in $\mathcal{A}$,
\begin{equation}\label{B_prop2_1}
\delta(AB)=A\delta(B)+B\delta(A)-AB\zeta.
\end{equation}
Let $\mathcal{M}_{\ast}$ be a predual right $\mathcal{A}$-module of $\mathcal{M}$, $\pi_{\mathcal{M}_{\ast}}:\mathcal{M}^{\ast\ast}\rightarrow\mathcal{M}$ be the natural projection. Set $\xi=\pi_{\mathcal{M}_{\ast}}(\zeta)\in\mathcal{M}$. Since $\pi_{\mathcal{M}_{\ast}}$ is a left $\mathcal{A}$-module homomorphism, applying $\pi_{\mathcal{M}_{\ast}}$ to (\ref{B_prop2_1}), we arrive at that for each $A,B$ in $\mathcal{A}$,
\begin{equation}\label{B_prop2_2}
 \delta(AB)=A\delta(B)+B\delta(A)-AB\xi.
\end{equation}

If $\mathcal{A}$ has a bounded approximate identity $(\rho_i)_{i\in \mathcal{I}}$ and $\delta$ is continuous, then by (\ref{B_prop2_2}), it follows that
\begin{align*}
  A\cdot\delta(B) & = A\cdot\lim\limits_{i\in \mathcal{I}}\delta(B\rho_i) \\
  & = \lim\limits_{i\in \mathcal{I}}(AB\delta(\rho_i)+A\rho_i\delta(B)-AB\rho_i\xi) \\
  & = \lim\limits_{i\in \mathcal{I}}AB\delta(\rho_i)+A\delta(B)-AB\xi.
\end{align*}
Thus, $\lim\limits_{i\in \mathcal{I}}AB\delta(\rho_i)=AB\xi$ for each $A,B$ in $\mathcal{A}$
. According to Cohen's factorization theorem, we have that
\begin{equation}\label{B_prop2_3}
\lim\limits_{i\in \mathcal{I}}~C\cdot\delta(\rho_i)=C\cdot\xi
\end{equation}
for each $C$ in $\mathcal{A}$.

\noindent\hangafter=1\setlength{\hangindent}{2em}
\item[~($i$)$\Rightarrow$($ii$)] If $\delta$ is a left derivation, then by (\ref{B_prop2_2}), we have $AB\xi=0$ for each $A,B$ in $\mathcal{A}$, i.e., $C\xi=0$ for each $C$ in $\mathcal{A}$, and then by (\ref{B_prop2_3}), we arrive at that $$\lim\limits_{i\in \mathcal{I}}~C\cdot\delta(\rho_i)=0$$ for each $C$ in $\mathcal{A}$.
\item[~($ii$)$\Rightarrow$($i$)] If $\lim\limits_{i\in \mathcal{I}}~C\cdot\delta(\rho_i)=0$ for each $C$ in $\mathcal{A}$, then by (\ref{B_prop2_3}), we have $C\xi=0$ for each $C$ in $\mathcal{A}$, and then by (\ref{B_prop2_2}), we arrive
    at that $$\delta(AB)=A\delta(B)+B\delta(A)$$ for each $A,B$ in $\mathcal{A}$, i.e., $\delta$ is a left derivation.

\setlength{\hangindent}{0em}\indent
In addition, if $Rann_{\mathcal{M}}(\mathcal{A})=\{0\}$, we claim that $\mathcal{M}_{\ast}=\overline{\mathcal{M}_{\ast}\mathcal{A}}$. For otherwise, there exists an element $M$ in $\mathcal{M}\setminus\{0\}$ such that $\langle M_{\ast}C,M\rangle=0$ for each $C$ in $\mathcal{A}$ and $M_{\ast}$ in $\mathcal{M}_{\ast}$. It follows that $\langle M_{\ast},CM\rangle=0$ for each $C$ in $\mathcal{A}$ and $M_{\ast}$ in $\mathcal{M}_{\ast}$, and so $CM=0$ for each $C$ in $\mathcal{A}$. Since $Rann_{\mathcal{M}}(\mathcal{A})=\{0\}$, then $M=0$ which contradicts that $M\in\mathcal{M}\setminus\{0\}$. So $\mathcal{M}_{\ast}=\overline{\mathcal{M}_{\ast}\mathcal{A}}$.

\noindent\hangafter=1\setlength{\hangindent}{2em}
\item[~($ii$)$\Rightarrow$($iii$)] Suppose that $\lim\limits_{i\in \mathcal{I}}~C\cdot\delta(\rho_i)=0$ for each $C$ in $\mathcal{A}$. For each $M_{\ast}$ in $\mathcal{M}_{\ast}$, there exist $\alpha_j$ in $\mathbb{C}$, $(N_{\ast})_j$ in $\mathcal{M}_{\ast}$, and $C_j$ in $\mathcal{A}$ $(n=1,2,...,n)$ such that $M_{\ast}=\sum\limits_{j=1}^{n} \alpha_j(N_{\ast})_j C_j$. Thus
    \begin{small}
    \begin{align*}
      \lim\limits_{i\in \mathcal{I}}~\langle M_{\ast}, \delta(\rho_i)\rangle &= \lim\limits_{i\in \mathcal{I}}~\langle \sum\limits_{j=1}^{n} \alpha_j(N_{\ast})_j C_j, \delta(\rho_i)\rangle\\
      &= \lim\limits_{i\in \mathcal{I}}~\sum\limits_{j=1}^{n} \alpha_j\langle (N_{\ast})_j C_j, \delta(\rho_i)\rangle = \lim\limits_{i\in \mathcal{I}}~\sum\limits_{j=1}^{n} \alpha_j\langle (N_{\ast})_j, C_j\delta(\rho_i)\rangle.
    \end{align*}
    \end{small}
    Since $\{\|\delta(\rho_i)\|:i\in \mathcal{I}\}$ is bounded, we have that for each $M_{\ast}$ in $\mathcal{M}_{\ast}$, $\lim\limits_{i\in \mathcal{I}}~\langle M_{\ast},\delta(\rho_i)\rangle=0$. So (iii) holds.
\item[~($iii$)$\Rightarrow$($i$)] If $\sigma(\mathcal{M},\mathcal{M}_{\ast})-\lim\limits_{i\in \mathcal{I}}\delta(\rho_i)=0$, then by (\ref{B_prop2_3}), for each $C$ in $\mathcal{A}$ and $M_{\ast}$ in $\mathcal{M}_{\ast}$,
      $$\langle M_{\ast},C\cdot\xi\rangle=\lim\limits_{i\in \mathcal{I}}~\langle M_{\ast},C\cdot\delta(\rho_i)\rangle=\lim\limits_{i\in \mathcal{I}}~\langle M_{\ast}\cdot C,\delta(\rho_i)\rangle=0.$$
      Thus $C\cdot\xi=0$ for each $C$ in $\mathcal{A}$. With (\ref{B_prop2_2}),
      $$\delta(AB)=A\delta(B)+B\delta(A)$$
      for each $A,B$ in $\mathcal{A}$. That is, $\delta$ is a left derivation.
\end{proof}

The proof of the following theorem is similar to the proof of Proposition \ref{B_prop1}. We leave it to the reader.
\begin{proposition}\label{B_prop3}
Let $\mathcal{A}$ and $\mathcal{M}$ be as in Proposition \ref{B_prop1},
and $\delta:\mathcal{A}\rightarrow\mathcal{M}$ be a generalized Jordan left derivation. Then there exists an element $\xi$ in $\mathcal{M}$ such that for each $A,B$ in $\mathcal{A}$,
$$\delta(A\circ B)=2A\delta(B)+2B\delta(A)-(A\circ B)\xi.$$
Furthermore,
\vspace{-1ex}
\begin{center}
$\delta$ is a Jordan left derivation~~$\Leftrightarrow$~~$I\cdot\delta(I)=0$.
\end{center}
\end{proposition}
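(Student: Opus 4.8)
The plan is to imitate the proof of Proposition \ref{B_prop1}, with the Jordan product in place of the ordinary product. By the definition of a generalized Jordan left derivation there is an element $\zeta$ in $\mathcal{M}^{\ast\ast}$ with
\[
\delta(A\circ B)=2A\delta(B)+2B\delta(A)-(A\circ B)\zeta
\]
for all $A,B$ in $\mathcal{A}$. The first step is to observe that this identity already forces $(A\circ B)\zeta=2A\delta(B)+2B\delta(A)-\delta(A\circ B)$ to lie in $\mathcal{M}$ for every pair $A,B$; taking $A=B=I$ gives $2I\zeta\in\mathcal{M}$, hence $I\zeta\in\mathcal{M}$, where $\mathcal{M}$ is regarded as a closed submodule of $\mathcal{M}^{\ast\ast}$ via the embedding $\iota_{\mathcal{M}}$.

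Next I would set $\xi=I\zeta\in\mathcal{M}$ and verify that it does the job. Using the left-module axioms together with $I\cdot I=I$ in $\mathcal{A}$, one has $(A\circ B)\xi=(A\circ B)(I\zeta)=((A\circ B)I)\zeta=(A\circ B)\zeta$ for all $A,B$, so the defining identity becomes $\delta(A\circ B)=2A\delta(B)+2B\delta(A)-(A\circ B)\xi$ with $\xi$ now in $\mathcal{M}$; this is the first assertion. For the ``furthermore'' part the key is the identity $\xi=I\cdot\delta(I)$. Putting $A=B=I$ in the last displayed identity yields $2\delta(I)=4I\delta(I)-2I\xi$, i.e. $\delta(I)=2I\delta(I)-I\xi$; applying the action of $I$ to both sides and using $I\cdot I=I$ gives $I\delta(I)=2I\delta(I)-I\xi$, so $I\xi=I\delta(I)$, and since $\xi=I\zeta$ we also have $I\xi=\xi$, whence $\xi=I\cdot\delta(I)$. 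From here the equivalence follows at once: if $\delta$ is a Jordan left derivation then $(A\circ B)\xi=0$ for all $A,B$, so $2I\xi=0$ and $\xi=I\xi=0$, giving $I\cdot\delta(I)=\xi=0$; conversely, if $I\cdot\delta(I)=0$ then $\xi=I\cdot\delta(I)=0$ and the defining identity collapses to $\delta(A\circ B)=2A\delta(B)+2B\delta(A)$.

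The only point that needs a little care — and the reason the statement is not entirely vacuous — is that $\mathcal{M}$ is not assumed to be unital, so one cannot simply write $I\cdot M=M$ for $M$ in $\mathcal{M}$. What rescues the argument is that $I$ acts on $\mathcal{M}^{\ast\ast}$ as an idempotent operator and the chosen $\xi=I\zeta$ lies in its range, so that $I\xi=\xi$; this single fact is what makes both the descent from $\zeta\in\mathcal{M}^{\ast\ast}$ to $\xi\in\mathcal{M}$ and the identity $\xi=I\cdot\delta(I)$ go through. No continuity assumption on $\delta$ and no extra hypothesis on $\mathcal{M}$ are needed, exactly as in Proposition \ref{B_prop1}.
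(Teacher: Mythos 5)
Your proof is correct and is exactly the adaptation of the Proposition \ref{B_prop1} argument (take $\xi=I\zeta\in\mathcal{M}$, use $I\xi=\xi$ and module associativity, and identify $\xi=I\cdot\delta(I)$ for the ``furthermore'' part) that the paper has in mind when it leaves this proof to the reader. The extra care you take about $\mathcal{M}$ not being unital is apt and does not change the route.
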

\vspace{1ex}
\begin{proposition}\label{B_prop4}
Let $\mathcal{A}$ and $\mathcal{M}$ be as in Proposition \ref{B_prop2},
and $\delta:\mathcal{A}\rightarrow\mathcal{M}$ be a generalized Jordan left derivation. Then there exists an element $\xi$ in $\mathcal{M}$ such that for each $A,B$ in $\mathcal{A}$,
\begin{equation}\label{B_prop4_0}
\delta(A\circ B)=2A\delta(B)+2B\delta(A)-(A\circ B)\xi.
\end{equation}
If $\mathcal{A}$ has a bounded approximate identity $(\rho_i)_{i\in \mathcal{I}}$ and $\delta$ is continuous, then
\begin{center}
$\delta$ is a Jordan left derivation~~$\Leftrightarrow$~~$\lim\limits_{i\in \mathcal{I}}~A^2\cdot\delta(\rho_i)=0$ for each $A$ in $\mathcal{A}$.
\end{center}
\end{proposition}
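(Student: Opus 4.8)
The plan is to follow the proof of Proposition~\ref{B_prop2} step by step, with the defining identity of a generalized left derivation replaced by that of a generalized Jordan left derivation. The first task is to produce the element $\xi\in\mathcal{M}$. By hypothesis there is $\zeta\in\mathcal{M}^{\ast\ast}$ with $\delta(A\circ B)=2A\delta(B)+2B\delta(A)-(A\circ B)\zeta$ for all $A,B$ in $\mathcal{A}$. Let $\mathcal{M}_{\ast}$ be a predual right $\mathcal{A}$-module of $\mathcal{M}$ and $\pi_{\mathcal{M}_{\ast}}:\mathcal{M}^{\ast\ast}\rightarrow\mathcal{M}$ the natural projection; it is a left $\mathcal{A}$-module homomorphism and restricts to the identity on $\mathcal{M}$. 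Put $\xi=\pi_{\mathcal{M}_{\ast}}(\zeta)\in\mathcal{M}$. Applying $\pi_{\mathcal{M}_{\ast}}$ to the displayed identity and using that $\delta(A\circ B)$ and $2A\delta(B)+2B\delta(A)$ already lie in $\mathcal{M}$, together with $\pi_{\mathcal{M}_{\ast}}\bigl((A\circ B)\zeta\bigr)=(A\circ B)\,\pi_{\mathcal{M}_{\ast}}(\zeta)=(A\circ B)\xi$, yields (\ref{B_prop4_0}). This part is essentially identical to the first paragraph of the proof of Proposition~\ref{B_prop2}.

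Now assume $\mathcal{A}$ has a bounded approximate identity $(\rho_i)_{i\in\mathcal{I}}$ and $\delta$ is continuous. The key point, playing the role of (\ref{B_prop2_3}), is that $\lim_{i\in\mathcal{I}}A^{2}\delta(\rho_i)=A^{2}\xi$ for every $A\in\mathcal{A}$. To get this I would put $B=\rho_i$ in (\ref{B_prop4_0}) and multiply on the left by $A$, which, using $A(A\circ\rho_i)=A^{2}\rho_i+A\rho_i A$, gives
\[
2A^{2}\delta(\rho_i)=A\,\delta(A\circ\rho_i)-2(A\rho_i)\delta(A)+\bigl(A^{2}\rho_i+A\rho_i A\bigr)\xi .
\]
Since $A\rho_i\to A$ and $\rho_i A\to A$ in $\mathcal{A}$, one has $A\circ\rho_i\to 2A$, $A^{2}\rho_i\to A^{2}$ and $A\rho_i A\to A^{2}$; by continuity of $\delta$ and of the module action each of the three terms on the right converges, hence so does $A^{2}\delta(\rho_i)$, and $2A^{2}\delta(\rho_i)\to 2A\delta(A)-2A\delta(A)+A^{2}\xi+A^{2}\xi=2A^{2}\xi$, which gives the asserted identity. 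This argument uses only $A\rho_i\to A$, $\rho_i A\to A$ and continuity, never the (in general false) assertion that $\rho_i M\to M$ for $M\in\mathcal{M}$, just as in Proposition~\ref{B_prop2}.

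The equivalence then follows formally. From (\ref{B_prop4_0}), $\delta$ is a Jordan left derivation if and only if $(A\circ B)\xi=0$ for all $A,B$ in $\mathcal{A}$. Taking $A=B$ this forces $A^{2}\xi=0$ for all $A$, and conversely replacing $A$ by $A+B$ in $A^{2}\xi=0$ recovers $(A\circ B)\xi=0$; so $\delta$ is a Jordan left derivation if and only if $A^{2}\xi=0$ for every $A\in\mathcal{A}$. By the limit identity of the preceding paragraph the latter is equivalent to $\lim_{i\in\mathcal{I}}A^{2}\delta(\rho_i)=0$ for every $A\in\mathcal{A}$, which is exactly the claimed condition.

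I expect the one genuinely non-routine step to be the limit identity: one must decide to insert $\rho_i$ into the second slot of (\ref{B_prop4_0}), left-multiply by $A$, expand $A(A\circ\rho_i)$, and then check that each summand of the resulting right-hand side converges separately so that the left-hand side inherits convergence. After that the equivalence is pure algebra (a single polarization), and the production of $\xi$ simply repeats Proposition~\ref{B_prop2}.
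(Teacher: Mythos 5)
Your proof is correct, and its overall shape --- produce $\xi\in\mathcal{M}$ via the natural projection $\pi_{\mathcal{M}_{\ast}}$ as in Proposition \ref{B_prop2}, establish $\lim_{i\in\mathcal{I}}A^{2}\delta(\rho_i)=A^{2}\xi$, then read the equivalence off (\ref{B_prop4_0}) by polarization --- is the same as the paper's. Where you genuinely diverge is the derivation of the limit identity, and your route is cleaner. The paper starts from the algebraic identity $2A\rho_iA=(A\circ(A\circ\rho_i))-A^{2}\circ\rho_i$, expands both Jordan products with (\ref{B_prop4_0}) to get $2\delta(A\rho_iA)=2A^{2}\delta(\rho_i)+(6A\rho_i+2\rho_iA)\delta(A)-2\rho_i\delta(A^{2})-(2A^{2}\rho_i+4A\rho_iA)\xi$, and passes to the limit; this requires $\rho_i\delta(A^{2})\to\delta(A^{2})$, which is not automatic for a general Banach left module and needs the extra observation that $\delta(A^{2})=2A\delta(A)-A^{2}\xi$ (so that $\delta(A^{2})$ lies in the essential part of $\mathcal{M}$), a point the paper leaves implicit. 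Your computation --- put $B=\rho_i$ in (\ref{B_prop4_0}) and left-multiply by $A$ --- uses only $A\rho_i\to A$, $\rho_iA\to A$, continuity of $\delta$ and of the module action, so it reaches $\lim_{i\in\mathcal{I}}A^{2}\delta(\rho_i)=A^{2}\xi$ more directly and sidesteps that subtlety, at the cost of nothing. The remaining steps (the projection argument for $\xi$, and the chain ``$\delta$ is a Jordan left derivation $\Leftrightarrow$ $A^{2}\xi=0$ for all $A$ $\Leftrightarrow$ $\lim_{i\in\mathcal{I}}A^{2}\delta(\rho_i)=0$'', with the polarization $A\mapsto A+B$ spelled out where the paper leaves it tacit) coincide with the paper's argument.
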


\begin{proof}[\bf Proof]
The existence of $\xi\in\mathcal{M}$ can be proved similarly as the proof given in Proposition \ref{B_prop2}.

If $\mathcal{A}$ has a bounded approximate identity $(\rho_i)_{i\in \mathcal{I}}$ and $\delta$ is continuous, then
with the following equation
\begin{equation}\label{B_prop4_1}
2A\rho_i A=(A\circ(A\circ\rho_i))-A^2\circ\rho_i
\end{equation}
for each $A$ in $\mathcal{A}$, we obtain that
\begin{align*}
  2\delta(A\rho_i A)~=&~\delta(A\circ(A\circ\rho_i))-\delta(A^2\circ\rho_i)\\
  =&~2A^2\delta(\rho_i)+(6A\rho_i+2\rho_i A)\delta(A)-2\rho_i\delta(A^2)-(2A^2\rho_i+4A\rho_i A)\xi.
\end{align*}
By taking limits in the above identities, we arrive at that
\begin{align*}
   2\delta(A^2)~=&~\lim\limits_{i\in \mathcal{I}}~2\delta(A\rho_i A)\\
   ~=&~\lim\limits_{i\in \mathcal{I}}~(2A^2\delta(\rho_i)+(6A\rho_i+2\rho_i A)\delta(A)-2\rho_i\delta(A^2)-(2A^2\rho_i+4A\rho_i A)\xi)\\
   ~=&~2\lim\limits_{i\in \mathcal{I}}A^2\delta(\rho_i)+8A\delta(A)-2\delta(A^2)-6A^2\xi.
\end{align*}
Since $2\delta(A^2)=\delta(A\circ A)=4A\delta(A)-2A^2\xi$, we obtain that
\begin{equation}\label{B_prop4_2}
\lim\limits_{i\in \mathcal{I}}A^2\delta(\rho_i)=A^2\xi
\end{equation}
for each $A$ in $\mathcal{A}$.

Conversely, if $\delta$ is a Jordan left derivation, by (\ref{B_prop4_0}), then $(A\circ B)\xi=0$ for each $A,B$ in $\mathcal{A}$. With (\ref{B_prop4_2}), we have that for each $A$ in $\mathcal{A}$
$$\lim\limits_{i\in \mathcal{I}}A^2\delta(\rho_i)=A^2\xi=0.$$
And if $\lim\limits_{i\in \mathcal{I}}~A^2\cdot\delta(\rho_i)=0$ for each $A$ in $\mathcal{A}$, then by (\ref{B_prop4_2}), we have $A^2\xi=0$ for each $A$ in $\mathcal{A}$. With (\ref{B_prop4_0}), we have that $\delta(A^2)=2A\delta(A)$ for each $A$ in $\mathcal{A}$. That is, $\delta$ is a Jordan left derivation.
\end{proof}

Now, we arrive at our main goals on the basis of the above results.

\begin{theorem}\label{B_TH1}
Let $\mathcal{A}$ be a Banach algebra with the property ($\mathbb{B}$) and have a bounded approximate identity, $\mathcal{M}$ be a Banach left $\mathcal{A}$-module with $Rann_{\mathcal{M}}(\mathcal{A})=\{0\}$, and $\delta:\mathcal{A}\rightarrow\mathcal{M}$ be a continuous linear operator. Then $\delta$ satisfies (D1) if and only if for each $C$ in $\mathcal{A}$, $C\cdot\delta$ is a generalized left derivation.\\
\indent Furthermore, let $C$ be a element in $\mathcal{A}$. If $C\cdot\delta$ is a generalized left derivation with $\xi$ in $\mathcal{M}^{\ast\ast}$, then $\xi$ can be chosen in $\mathcal{M}$ in each of the following cases:
\vspace{-1ex}
\begin{enumerate}[\indent $\mathrm{(}i\mathrm{)}$]
  \item $\mathcal{A}$ is unital,
  \item $\mathcal{M}$ is a dual left $\mathcal{A}$-module.
\end{enumerate}
\end{theorem}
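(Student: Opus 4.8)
I treat the two implications of the stated equivalence separately.

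$(\Leftarrow)$ Suppose that $C\cdot\delta$ is a generalized left derivation for every $C$ in $\mathcal{A}$, and let $A,B,C$ in $\mathcal{A}$ satisfy $AB=BC=0$. Applying the defining identity of the generalized left derivation $C\cdot\delta$ to the pair $(A,B)$, the left-hand side equals $C\delta(AB)=C\delta(0)=0$, the term $AB\xi$ on the right vanishes, and $BC\delta(A)=0$ since $BC=0$; the only surviving summand is $AC\delta(B)$, so $AC\delta(B)=0$, i.e.\ $\delta$ satisfies (D1). This uses neither property ($\mathbb{B}$) nor the approximate identity, and only the generalized-left-derivation property of $C\cdot\delta$ for the particular $C$ occurring in (D1).

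$(\Rightarrow)$ Fix $C$ in $\mathcal{A}$; the goal is to produce $\xi$ in $\mathcal{M}^{\ast\ast}$ with $C\delta(XY)=XC\delta(Y)+YC\delta(X)-XY\xi$ for all $X,Y$ in $\mathcal{A}$. The plan is to route this through property ($\mathbb{B}$). First I would build, from $\delta$ and the fixed $C$, a continuous bilinear map $\phi:\mathcal{A}\times\mathcal{A}\to\mathcal{X}$ into a suitable Banach space $\mathcal{X}$ (for instance $\mathcal{M}$, or the operator space $\mathcal{L}(\mathcal{A},\mathcal{M})$, or $\mathcal{M}^{\ast\ast}$) designed so that the failure of $C\cdot\delta$ to be a generalized left derivation is recorded by $\phi$. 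The crucial step is to verify that $\phi$ preserves zero products; this is where (D1) enters. Since (D1) presupposes two simultaneous annihilations, $AB=0$ and $BC=0$, whereas property ($\mathbb{B}$) only perceives a single vanishing product, I would first apply Cohen's factorization theorem --- available because $\mathcal{A}$ has a bounded approximate identity --- to rewrite the relevant elements as products, so that one relation $XY=0$ forces the vanishing of enough subproducts to trigger (D1). Property ($\mathbb{B}$) then yields the balanced identity $\phi(XY,Z)=\phi(X,YZ)$ for all $X,Y,Z$ in $\mathcal{A}$. Finally I would insert the bounded approximate identity $(\rho_i)_{i\in\mathcal{I}}$, pass to a weak-$\ast$ accumulation point in $\mathcal{M}^{\ast\ast}$ of a bounded net assembled from the values $\delta(\rho_i)$, use $Rann_{\mathcal{M}}(\mathcal{A})=\{0\}$ to pin that point down, and verify the generalized-left-derivation identity for all $X,Y$ by one more factorization-and-limit argument.

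The step I expect to be the main obstacle is precisely the construction of $\phi$ together with the Cohen-factorization reduction: one must shape $\phi$ so carefully that the single hypothesis $XY=0$ genuinely propagates, through (D1), to the vanishing of $\phi(X,Y)$ --- this is the conceptual heart of the argument, since the most obvious ``defect'' map for $C\cdot\delta$ does \emph{not} preserve zero products under (D1) alone. A secondary technical point is the bookkeeping of weak-$\ast$ limits, which is exactly why $\xi$ lands a priori only in $\mathcal{M}^{\ast\ast}$, together with checking that the limits invoked converge (using continuity of $\delta$, boundedness of $(\rho_i)$, and, where needed, that the relevant elements of $\mathcal{M}$ lie in $\overline{\mathcal{A}\mathcal{M}}$). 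Finally, the ``Furthermore'' assertions need no additional work: once $C\cdot\delta$ is known to be a generalized left derivation, case ($i$) ($\mathcal{A}$ unital) is Proposition \ref{B_prop1} applied to the linear map $C\cdot\delta:\mathcal{A}\to\mathcal{M}$, and case ($ii$) ($\mathcal{M}$ a dual Banach left $\mathcal{A}$-module) is Proposition \ref{B_prop2} applied to the same map; in both cases the representing element of $\mathcal{M}^{\ast\ast}$ may be replaced by one in $\mathcal{M}$.
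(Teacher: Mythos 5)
Your $(\Leftarrow)$ direction and your ``Furthermore'' reductions to Propositions \ref{B_prop1} and \ref{B_prop2} are correct and coincide with the paper's argument. The $(\Rightarrow)$ direction, however, is only a strategy, and the step you yourself flag as the main obstacle --- shaping a continuous bilinear map that preserves zero products using only (D1) --- is precisely the content of the paper's proof and is absent from yours; in particular, your proposed fix (Cohen factorization so that a single relation $XY=0$ ``forces enough subproducts to vanish'' to trigger the two annihilations in (D1)) is not a workable substitute, since $XY=0$ yields no factorization of $X$ or $Y$ with additional vanishing products.

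What the paper actually does is parametrize the bilinear map by a zero-product pair: for each fixed $A',B'$ in $\mathcal{A}$ with $A'B'=0$ it sets $\phi(A,B)=AB'\delta(BA')$; if $AB=0$ then $A\cdot(BA')=0$ and $(BA')\cdot B'=0$, so (D1) applied to the triple $(A,BA',B')$ gives $\phi(A,B)=0$, and property ($\mathbb{B}$) yields $ABB'\delta(CA')=AB'\delta(BCA')$. A \emph{second} application of property ($\mathbb{B}$), now to $\psi(A',B')=ABB'\delta(CA')-AB'\delta(BCA')$ with $A,B,C$ fixed (this $\psi$ kills zero products by the first identity), produces an identity in six variables; writing $\tilde{\delta}=C'\delta$ and $X=CA'$ and cancelling the left factor $A$ via $Rann_{\mathcal{M}}(\mathcal{A})=\{0\}$ gives $BB'\tilde{\delta}(X)=B\tilde{\delta}(XB')+B'\tilde{\delta}(BX)-\tilde{\delta}(BXB')$, and only then does one put $X=\rho_i$ and take a $\sigma(\mathcal{M}^{\ast\ast},\mathcal{M}^{\ast})$-accumulation point $\xi$ of $(C'\delta(\rho_i))_{i\in\mathcal{I}}$ to get $\tilde{\delta}(BB')=B\tilde{\delta}(B')+B'\tilde{\delta}(B)-BB'\xi$. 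Note also that your outline misallocates two hypotheses: $Rann_{\mathcal{M}}(\mathcal{A})=\{0\}$ is used to cancel the multiplier $A$, not to pin down the weak-$\ast$ limit, and the bounded approximate identity enters in the final limit (and, implicitly, in letting $X=CA'$ exhaust $\mathcal{A}$), not in creating the two zero products needed for (D1). Without the construction of $\phi$ and the double use of property ($\mathbb{B}$), the forward implication is not proved, so the proposal has a genuine gap.
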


\begin{proof}[\bf Proof]
If $\delta$ satisfies (D1), then for each $A',B'$ in $\mathcal{A}$ with $A'B'=0$, we define the bilinear map $\phi~:~\mathcal{A}\times\mathcal{A}\rightarrow\mathcal{M}$ by $$\phi(A,B)=AB'\delta(BA')$$
for each $A,B$ in $\mathcal{A}$. For each $A,B$ in $\mathcal{A}$ with $AB=0$, since $ABA'=BA'B'=0$, we have that
$$\phi(A,B)=0.$$
Since $\mathcal{A}$ has the property ($\mathbb{B}$), we have that for each $A,B,C$ in $\mathcal{A}$, $\phi(AB,C)=\phi(A,BC)$, i.e.,
\begin{equation}\label{B_TH1_1}
ABB'\delta(CA')=AB'\delta(BCA').
\end{equation}
Now we fix $A,B,C$ in $\mathcal{A}$, and consider the bilinear mapping $\psi~:~\mathcal{A}\times\mathcal{A}\rightarrow\mathcal{M}$ such that for each $A',B'$ in $\mathcal{A}$,
$$\psi(A',B')=ABB'\delta(CA')-AB'\delta(BCA').$$
On account of (\ref{B_TH1_1}), $\psi(A',B')=0$ for each $A',B'$ in $\mathcal{A}$ with $A'B'=0$. Since $\mathcal{A}$ has the property ($\mathbb{B}$), it follows that $$\psi(A'B',C')=\psi(A',B'C')$$ for each $A',B',C'$ in $\mathcal{A}$. Thus, for each $A',B',C',A,B,C$ in $\mathcal{A}$,
\begin{multline}\label{B_TH1_2}
  ABC'\delta(CA'B')-AC'\delta(BCA'B')\\
  =ABB'C'\delta(CA')-AB'C'\delta(BCA').
\end{multline}
Denote $\tilde{\delta}=C'\delta$ for each $C'$ in $\mathcal{A}$, and $X=CA'$ 
, then for each $X,B',A,B$ in $\mathcal{A}$,
$$A(\tilde{\delta}(BXB')-B\tilde{\delta}(XB')-B'\tilde{\delta}(BX)+BB'\tilde{\delta}(X))=0.$$
Since $Rann_{\mathcal{M}}(\mathcal{A})=\{0\}$, we have that for each $X,B',B$ in $\mathcal{A}$,
\begin{equation}\label{B_TH1_3}
BB'\tilde{\delta}(X)=B\tilde{\delta}(XB')+B'\tilde{\delta}(BX)-\tilde{\delta}(BXB').
\end{equation}

Let $(\rho_i)_{i\in \mathcal{I}}$ be a bounded approximate identity of $\mathcal{A}$. Since the net $(\delta(\rho_i))_{i\in \mathcal{I}}$ is bounded, it has a convergent subnet. Without loss of generality, we can assume that there exists an element $\xi$ in $\mathcal{M}^{\ast\ast}$ such that $\lim\limits_{i\in \mathcal{I}}\tilde{\delta}(\rho_i)=\lim\limits_{i\in \mathcal{I}}C'\delta(\rho_i)=\xi$ with respect to the topology $\sigma(\mathcal{M}^{\ast\ast},\mathcal{M}^{\ast})$. With (\ref{B_TH1_3}), we have that for each $B',B$ in $\mathcal{A}$,
$$BB'\tilde{\delta}(\rho_i)=B\tilde{\delta}(\rho_iB')+B'\tilde{\delta}(B\rho_i)-\tilde{\delta}(B\rho_iB').$$
Therefore, the net $(BB'\tilde{\delta}(\rho_i))_{i\in \mathcal{I}}$ satisfies $$\lim\limits_{i\in \mathcal{I}}~ BB'\tilde{\delta}(\rho_i)=B\tilde{\delta}(B')+B'\tilde{\delta}(B)-\tilde{\delta}(BB')$$ with respect to the norm topology. On the other hand, for each $M_{\ast}$ in $\mathcal{M}_{\ast}$, with respect to $\sigma(\mathcal{M}^{\ast\ast},\mathcal{M}^{\ast})$, we have that
\begin{align*}
  \langle M_{\ast},~\lim\limits_{i\in \mathcal{I}}~ BB'\tilde{\delta}(\rho_i)\rangle & =\lim\limits_{i\in \mathcal{I}}~ \langle M_{\ast},~ BB'\tilde{\delta}(\rho_i)\rangle=\lim\limits_{i\in \mathcal{I}}~ \langle M_{\ast}BB',~\tilde{\delta}(\rho_i)\rangle \\
   & =\lim\limits_{i\in \mathcal{I}}~ \langle M_{\ast}BB',~\xi \rangle=\lim\limits_{i\in \mathcal{I}}~ \langle M_{\ast},~ BB'\xi \rangle~~
\end{align*}
Thus there exists $\xi$ in $\mathcal{M}^{\ast\ast}$ such that for each $B',B$ in $\mathcal{A}$,
$$\tilde{\delta}(BB')=B\tilde{\delta}(B')+B'\tilde{\delta}(B)-BB'\xi,$$
i.e., $\tilde{\delta}$ is a generalized left derivation.

If for each $C$ in $\mathcal{A}$, $C\cdot\delta$ is a generalized left derivation, then for each $A,B,D$ in $\mathcal{A}$ with $AB=BD=0$, we have that
$$0=D\delta(AB)=AD\delta(B)+BD\delta(A)-AB\xi=AD\delta(B).$$
Hence, $AD\delta(B)=0$ for each $A,B,D$ in $\mathcal{A}$ with $AB=BD=0$.

Furthermore, let $C$ in $\mathcal{A}$. If $C\cdot\delta$ is a generalized left derivation with $\xi$ in $\mathcal{M}^{\ast\ast}$, then by Propositions \ref{B_prop1} and \ref{B_prop2}, $\xi$ can be chosen in $\mathcal{M}$ if $\mathcal{A}$ is unital, or if $\mathcal{M}$ is a dual left $\mathcal{A}$-module.
\end{proof}

\begin{theorem}\label{B_TH2}
Let $\mathcal{A}$ and $\mathcal{M}$ be as in Theorem \ref{B_TH1},
and $\delta:\mathcal{A}\rightarrow\mathcal{M}$ be a continuous linear operator. If $\delta$ satisfies (D2), then for each $C$ in $\mathcal{A}$, $C\cdot\delta$ is a generalized left derivation. \\
\indent Furthermore, let 
$C$ be a element in $\mathcal{A}$. If $C\cdot\delta$ is a generalized left derivation with $\xi$ in $\mathcal{M}^{\ast\ast}$, then $\xi$ can be chosen in $\mathcal{M}$ in each of the following cases:
\vspace{-1ex}
\begin{enumerate}[\indent $\mathrm{(}i\mathrm{)}$]
  \item $\mathcal{A}$ is unital,
  \item $\mathcal{M}$ is a dual left $\mathcal{A}$-module.
\end{enumerate}
\end{theorem}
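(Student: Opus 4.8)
The plan is to reduce the whole statement to Theorem~\ref{B_TH1} by observing that condition (D2) implies condition (D1). I would first check this implication by a direct computation: given $A,B,D$ in $\mathcal{A}$ with $AB=BD=0$, apply (D2) to the pair $(B,D)$, whose product $BD$ vanishes, to obtain $B\delta(D)+D\delta(B)=0$, hence $D\delta(B)=-B\delta(D)$; left-multiplying by $A$ and using $AB=0$ then gives $AD\delta(B)=-(AB)\delta(D)=0$, which is precisely (D1). Consequently every continuous linear $\delta$ satisfying (D2) also satisfies (D1).

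Granting this, the first assertion follows at once from Theorem~\ref{B_TH1}: since $\delta$ satisfies (D1), for every $C$ in $\mathcal{A}$ the mapping $C\cdot\delta$ is a generalized left derivation. (Alternatively, one may rerun the proof of Theorem~\ref{B_TH1} verbatim, taking as the initial zero-product-preserving bilinear map the symmetric map $\phi(A,B)=A\delta(B)+B\delta(A)$, which vanishes whenever $AB=0$ by (D2); applying the property~($\mathbb{B}$) as in the proof of Theorem~\ref{B_TH1} and then invoking $Rann_{\mathcal{M}}(\mathcal{A})=\{0\}$ together with the bounded approximate identity reproduces the generalized left derivation identity for $C\cdot\delta$. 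The reduction to (D1) is the shorter route.)

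The ``furthermore'' clause is identical to the corresponding clause of Theorem~\ref{B_TH1} and needs no new argument: once $C\cdot\delta$ is a generalized left derivation with some $\xi\in\mathcal{M}^{\ast\ast}$, Proposition~\ref{B_prop1} shows that $\xi$ may be replaced by $I\xi\in\mathcal{M}$ when $\mathcal{A}$ is unital, while Proposition~\ref{B_prop2} shows it may be replaced by $\pi_{\mathcal{M}_{\ast}}(\xi)\in\mathcal{M}$ when $\mathcal{M}$ is a dual left $\mathcal{A}$-module. I do not foresee a genuine obstacle here; the only care needed is in applying (D2) to the correct pair of elements and keeping track of which side the annihilating factor sits on in the derivation of (D1), after which everything follows formally from results already established.
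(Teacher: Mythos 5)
Your proposal is correct and follows essentially the same route as the paper: the paper's own proof likewise multiplies the (D2) identity for the zero product $BC=0$ on the left by $A$ to deduce (D1), then invokes Theorem~\ref{B_TH1} (with Propositions~\ref{B_prop1} and~\ref{B_prop2} covering the choice of $\xi$ in $\mathcal{M}$).
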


\begin{proof}[\bf Proof]
Take $A,B,C$ in $\mathcal{A}$ satisfying $AB=BC=0$, then
$$0=A\cdot(B\delta(C)+C\delta(B))=AB\delta(C)+AC\delta(B)=AC\delta(B).$$
For each $C$ in $\mathcal{A}$, let $\tilde{\delta}=C\cdot\delta$. By Theorem \ref{B_TH1}, $\tilde{\delta}$ is a generalized left derivation with $\xi$ in $\mathcal{M}^{\ast\ast}$.
\end{proof}
\vspace{0.3ex}
\begin{remark}
In Theorem \ref{B_TH2}, if there exists an element $C$ in $\mathcal{Z}(\mathcal{A})$ and $C$ is a left separating point of $\mathcal{M}$, then $\delta$ satisfies (D2) if and only if $\delta$ is a generalized left derivation.
\end{remark}

\begin{theorem}\label{B_TH3}
Let $\mathcal{A}$ be a Banach algebra with the property ($\mathbb{B}$) and have a bounded approximate identity, $\mathcal{M}$ be a Banach left $\mathcal{A}$-module, and $\delta:\mathcal{A}\rightarrow\mathcal{M}$ be a continuous linear operator. Then $\delta$ satisfies (D3) if and only if $\delta$ is a generalized Jordan left derivation. \\
\indent Furthermore, if $\delta$ is a generalized Jordan left derivation with $\xi$ in $\mathcal{M}^{\ast\ast}$, then $\xi$ can be chosen in $\mathcal{M}$ in each of the following cases:
\vspace{-1ex}
\begin{enumerate}[\indent $\mathrm{(}i\mathrm{)}$]
  \item $\mathcal{A}$ is unital,
  \item $\mathcal{M}$ is a dual left $\mathcal{A}$-module.
\end{enumerate}
\end{theorem}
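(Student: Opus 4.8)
The plan is to run the argument of Theorem \ref{B_TH1} with the symmetric condition (D3) in place of (D1), and to deduce the two special cases at the end from Propositions \ref{B_prop3} and \ref{B_prop4}. One implication is immediate: if $\delta$ is a generalized Jordan left derivation with some $\xi\in\mathcal{M}^{\ast\ast}$ and $A,B\in\mathcal{A}$ satisfy $AB=BA=0$, then $A\circ B=0$, so $\delta(A\circ B)=\delta(0)=0$, and the defining identity gives $0=2A\delta(B)+2B\delta(A)-(A\circ B)\xi=2\bigl(A\delta(B)+B\delta(A)\bigr)$; hence $A\delta(B)+B\delta(A)=0$, i.e. $\delta$ satisfies (D3).

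For the converse, assume (D3). The key observation is that the symmetry of (D3) manufactures continuous bilinear maps that preserve zero products: fix $A',B'\in\mathcal{A}$ with $A'B'=0$, and for $A,B\in\mathcal{A}$ with $AB=0$ put $P=B'A$ and $Q=BA'$; then $PQ=B'(AB)A'=0$ and $QP=B(A'B')A=0$, so by (D3) the continuous bilinear map
$$\phi_{A',B'}(A,B)=(B'A)\,\delta(BA')+(BA')\,\delta(B'A)$$
vanishes on all pairs $(A,B)$ with $AB=0$. Applying the property ($\mathbb{B}$) to $\phi_{A',B'}$ in the variables $A,B$, and then, with $A,B,C$ fixed, once more to the bilinear map in $A',B'$ that records the failure of the resulting equality — exactly as in the passage from (\ref{B_TH1_1}) to (\ref{B_TH1_3}) — one should arrive at a $\xi$-free identity of the same shape as (\ref{B_TH1_3}), namely one expressing a value of $\delta$ through values of $\delta$ at shorter arguments carrying outside multipliers, with one variable slot still free. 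Specializing that slot to a bounded approximate identity $(\rho_i)_{i\in\mathcal{I}}$ of $\mathcal{A}$ and passing to a $\sigma(\mathcal{M}^{\ast\ast},\mathcal{M}^{\ast})$-convergent subnet of the bounded net $(\delta(\rho_i))_{i\in\mathcal{I}}$ produces $\xi\in\mathcal{M}^{\ast\ast}$; in the identity every term other than the one containing $\delta(\rho_i)$ converges in norm, by continuity of $\delta$ together with the relations $A\rho_i\to A$ and $\rho_i A\to A$, while the remaining term converges in $\sigma(\mathcal{M}^{\ast\ast},\mathcal{M}^{\ast})$ to $(A\circ B)\xi$ (up to sign), and identifying the two limits yields $\delta(A\circ B)=2A\delta(B)+2B\delta(A)-(A\circ B)\xi$ for all $A,B$ — equivalently $\delta(A^{2})=2A\delta(A)-A^{2}\xi$, which already characterizes generalized Jordan left derivations by polarization. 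The main obstacle I anticipate is the bookkeeping in the two applications of ($\mathbb{B}$): because (D3) is symmetric, the natural auxiliary maps $\phi_{A',B'}$ are two-termed, so the intermediate identities are heavier than their analogues in the proof of Theorem \ref{B_TH1}; one must also keep the spare multipliers on the appropriate side throughout, since $\mathcal{M}$ is not assumed essential, so $(\rho_i)$ need not be an approximate identity for the module action on $\mathcal{M}$.

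For the remaining assertions, once $\delta$ is known to be a generalized Jordan left derivation with $\xi\in\mathcal{M}^{\ast\ast}$, Proposition \ref{B_prop3} shows that $\xi$ can be taken in $\mathcal{M}$ when $\mathcal{A}$ is unital, and Proposition \ref{B_prop4} shows the same when $\mathcal{M}$ is a dual left $\mathcal{A}$-module; this finishes the proof.
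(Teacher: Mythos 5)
Your overall strategy coincides with the paper's: the same two-term auxiliary map $\phi(A,B)=B'A\,\delta(BA')+BA'\,\delta(B'A)$, two applications of property ($\mathbb{B}$), a $\sigma(\mathcal{M}^{\ast\ast},\mathcal{M}^{\ast})$-cluster point $\xi$ of $(\delta(\rho_i))_{i\in\mathcal{I}}$, and Propositions \ref{B_prop3} and \ref{B_prop4} for the ``furthermore'' part; the easy direction is handled exactly as in the paper. The gap is at the central step, which you dispatch ``exactly as in the passage from (\ref{B_TH1_1}) to (\ref{B_TH1_3})'': that passage strips the outer left factor by using $Rann_{\mathcal{M}}(\mathcal{A})=\{0\}$, a hypothesis Theorem \ref{B_TH3} does not have, so no identity ``of the same shape as (\ref{B_TH1_3})'' with a single free slot is available to you. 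The paper's actual device is different: it keeps every multiplier, observes that after the second application of ($\mathbb{B}$) all eight terms involve the six variables only through the blocks $C'A$ and $CA'$ (together with $B,B'$), and hence (the blocks being arbitrary, e.g.\ by factorization through the bounded approximate identity) obtains the multiplier-laden identity \eqref{B_TH3_3} in four independent variables; the approximate identity is then substituted into \emph{two} slots at once, $A=C=\rho_i$, so that $\rho_i$ appears both inside $\delta$ and as the surviving outer multipliers.

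Because of this, your description of the limit step is also inaccurate: in the specialized identity there are two terms carrying $\delta(\rho_i)$ (these converge weak$^{\ast}$ to $BD\xi$ and $DB\xi$), and two terms of the form $\rho_i\delta(B\rho_iD)$ whose limits must be identified with $\delta(BD)$ and $\delta(DB)$ --- exactly the point where the essentiality concern you raise enters; you flag it as an obstacle but do not resolve it (the paper itself simply passes to the limit there). So while the skeleton of your plan is the right one, the two ideas specific to (D3) --- the block-renaming that replaces the annihilator cancellation of Theorem \ref{B_TH1}, and the simultaneous substitution of $\rho_i$ into two variables to produce the Jordan identity for $\delta$ itself --- are missing, and they are what actually close the argument.
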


\begin{proof}[\bf Proof]
If $\delta$ satisfies (D3), then for each $A',B'$ in $\mathcal{A}$ with $A'B'=0$, we define the bilinear map $\phi~:~\mathcal{A}\times\mathcal{A}\rightarrow\mathcal{M}$ by $$\phi(A,B)=B'A\delta(BA')+BA'\delta(B'A)$$
for each $A,B$ in $\mathcal{A}$. For each $A,B$ in $\mathcal{A}$ with $AB=0$, since $B'ABA'=BA'B'A=0$, we have
$$\phi(A,B)=0.$$
Since $\mathcal{A}$ has the property ($\mathbb{B}$), we have that for each $A,B,C$ in $\mathcal{A}$, $\phi(AB,C)=\phi(A,BC)$, i.e.,
\begin{equation}\label{B_TH3_1}
B'AB\delta(CA')+CA'\delta(B'AB)=B'A\delta(BCA')+BCA'\delta(B'A).
\end{equation}
Now we fix $A,B,C$ in $\mathcal{A}$, and consider the bilinear mapping $\psi~:~\mathcal{A}\times\mathcal{A}\rightarrow\mathcal{M}$ such that for each $A',B'$ in $\mathcal{A}$,
$$\psi(A',B')=B'AB\delta(CA')+CA'\delta(B'AB)-B'A\delta(BCA')-BCA'\delta(B'A).$$
For each $A',B'$ in $\mathcal{A}$ with $A'B'=0$, on account of (\ref{B_TH3_1}), $\psi(A',B')=0$. Since $\mathcal{A}$ has the property ($\mathbb{B}$), it follows that for each $A',B',C'$ in $\mathcal{A}$, $$\psi(A'B',C')=\psi(A',B'C').$$
Thus, for each $A',B',C',A,B,C$ in $\mathcal{A}$,
\begin{multline}\label{B_TH3_2}
  C'AB\delta(CA'B')+CA'B'\delta(C'AB)-C'A\delta(BCA'B')-BCA'B'\delta(C'A)\\
  =B'C'AB\delta(CA')+CA'\delta(B'C'AB)-B'C'A\delta(BCA')-BCA'\delta(B'C'A).
\end{multline}
By taking into account that all the terms in (\ref{B_TH3_2}) involve $C'A$ and $CA'$, we concluded that for each $A,B,C,D$ in $\mathcal{A}$,
\begin{multline}\label{B_TH3_3}
  AB\delta(CD)+CD\delta(AB)-A\delta(BCD)-BCD\delta(A)\\
  +DA\delta(BC)+BC\delta(DA)-DAB\delta(C)-C\delta(DAB)=0.
\end{multline}
Let $(\rho_i)_{i\in \mathcal{I}}$ be a bounded approximate identity of $\mathcal{A}$. Since the net $(\delta(\rho_i))_{i\in \mathcal{I}}$ is bounded, it has a convergent subnet. Without loss of generality, we can assume that there exists an element $\xi$ in $\mathcal{M}^{\ast\ast}$ such that $\lim\limits_{i\in \mathcal{I}}\delta(\rho_i)=\xi$ with respect to the topology $\sigma(\mathcal{M}^{\ast\ast},\mathcal{M}^{\ast})$.
Applying (\ref{B_TH3_3}) with $A=C=\rho_i$, we have
\begin{multline*}
  \rho_i B\delta(\rho_i D)+\rho_i D\delta(\rho_i B)-\rho_i \delta(B\rho_i D)-B\rho_i D\delta(\rho_i )\\
  +D\rho_i \delta(B\rho_i )+B\rho_i \delta(D\rho_i )-D\rho_i B\delta(\rho_i )-\rho_i \delta(D\rho_i B)=0.
\end{multline*}
Then with respect to the topology $\sigma(\mathcal{M}^{\ast\ast},\mathcal{M}^{\ast})$,
\begin{align*}
 \delta(B\circ D)~=&~\delta(BD)+\delta(DB) \\
  =&~\lim\limits_{i\in \mathcal{I}}~(\rho_i \delta(B\rho_i D)+\rho_i \delta(D\rho_i B)) \\
  =&~\lim\limits_{i\in \mathcal{I}}~(\rho_i B\delta(\rho_i D)+\rho_i D\delta(\rho_i B)-B\rho_i D\delta(\rho_i )\\
  ~&~+D\rho_i  \delta(B\rho_i )+B\rho_i \delta(D\rho_i )-D\rho_i B\delta(\rho_i )) \\
  =&~ B\delta(D)+ D\delta(B)-BD\xi+D\delta(B)+B\delta(D)-DB\xi\\
  =&~ 2B\delta(D)+ 2D\delta(B)-(B\circ D)\xi.
\end{align*}
Thus there exists $\xi$ in $\mathcal{M}^{\ast\ast}$ such that for each $B,D$ in $\mathcal{A}$,
$$\delta(B\circ D)=2B\delta(D)+2D\delta(B)-(B\circ D)\xi,$$
i.e., $\delta$ is a generalized Jordan left derivation.

If $\delta$ is a generalized Jordan left derivation, it's obvious that for each $A,B$ in $\mathcal{A}$ with $AB=BA=0$, we have that
\begin{multline*}
  0=\delta(AB)+\delta(BA)=\delta(A\circ B)\\
  =2A\delta(B)+2B\delta(A)-(A\circ B)\xi=2(A\delta(B)+B\delta(A)).
\end{multline*}
Thus, $A\delta(B)+B\delta(A)=0$ for each $A,B$ in $\mathcal{A}$ with $AB=BA=0$.

Furthermore, if $\delta$ is a generalized Jordan left derivation with $\xi$ in $\mathcal{M}^{\ast\ast}$, then by Propositions \ref{B_prop3} and \ref{B_prop4}, $\xi$ can be chosen in $\mathcal{M}$ if $\mathcal{A}$ is unital, or if $\mathcal{M}$ is a dual left $\mathcal{A}$-module.
\end{proof}

\begin{theorem}\label{B_TH4}
Let $\mathcal{A}$ and $\mathcal{M}$ be as in Theorem \ref{B_TH3},
and $\delta:\mathcal{A}\rightarrow\mathcal{M}$ be a continuous linear operator. Then $\delta$ satisfies (D4) if and only if $\delta$ is a generalized Jordan left derivation. \\
\indent Furthermore, if $\delta$ is a generalized Jordan left derivation with $\xi$ in $\mathcal{M}^{\ast\ast}$, then $\xi$ can be chosen in $\mathcal{M}$ in each of the following cases:
\vspace{-1ex}
\begin{enumerate}[\indent $\mathrm{(}i\mathrm{)}$]
  \item $\mathcal{A}$ is unital,
  \item $\mathcal{M}$ is a dual left $\mathcal{A}$-module.
\end{enumerate}
\end{theorem}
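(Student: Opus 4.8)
The plan is to reduce the equivalence entirely to Theorem \ref{B_TH3}, exploiting the fact that condition (D4) is formally stronger than (D3). The first step is the elementary observation that if $A,B\in\mathcal{A}$ satisfy $AB=BA=0$, then $A\circ B=AB+BA=0$; hence $\{(A,B):AB=BA=0\}\subseteq\{(A,B):A\circ B=0\}$, and therefore any $\delta$ satisfying (D4) automatically satisfies (D3). Consequently, if $\delta$ satisfies (D4), Theorem \ref{B_TH3} applies verbatim and shows that $\delta$ is a generalized Jordan left derivation.

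For the reverse implication I would argue directly from the defining identity. Suppose $\delta$ is a generalized Jordan left derivation, so there is $\xi\in\mathcal{M}^{\ast\ast}$ with $\delta(A\circ B)=2A\delta(B)+2B\delta(A)-(A\circ B)\xi$ for all $A,B\in\mathcal{A}$. If $A\circ B=0$, the left-hand side is $\delta(0)=0$ and the term $(A\circ B)\xi$ vanishes, so $2A\delta(B)+2B\delta(A)=0$, i.e. $A\delta(B)+B\delta(A)=0$. Thus (D4) holds, completing the equivalence. I would present the two halves in the order: (D4)$\Rightarrow$ generalized Jordan left derivation (via (D3) and Theorem \ref{B_TH3}), then generalized Jordan left derivation $\Rightarrow$ (D4) (via the displayed identity).

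The ``furthermore'' part requires no new work: once $\delta$ is known to be a generalized Jordan left derivation with $\xi\in\mathcal{M}^{\ast\ast}$, Proposition \ref{B_prop3} gives that $\xi$ may be chosen in $\mathcal{M}$ when $\mathcal{A}$ is unital, and Proposition \ref{B_prop4} gives the same conclusion when $\mathcal{M}$ is a dual left $\mathcal{A}$-module; I would simply cite these two propositions as in the proof of Theorem \ref{B_TH3}. There is essentially no genuine obstacle here; the only point that needs care is getting the direction of the inclusion between the pair-sets defining (D3) and (D4) right, so that (D4) is recognized as the stronger hypothesis and Theorem \ref{B_TH3} can be invoked without modification.
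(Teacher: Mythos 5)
Your proposal is correct and follows essentially the same route as the paper: (D4) implies (D3) since $AB=BA=0$ forces $A\circ B=0$, so Theorem \ref{B_TH3} applies, while the converse follows directly from the defining identity of a generalized Jordan left derivation when $A\circ B=0$, and the ``furthermore'' part is obtained by citing Propositions \ref{B_prop3} and \ref{B_prop4} exactly as in the paper.
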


\begin{proof}[\bf Proof]
If $\delta$ satisfies (D4), we consider $A,B$ in $\mathcal{A}$ with $AB=BA=0$, then $A\circ B=0$. With property (D4), we have that $A\delta(B)+B\delta(A)=0$, and $\delta$ satisfies (D3). By Theorem \ref{B_TH3}, $\delta$ is a generalized Jordan left derivation.

And if $\delta$ is a generalized Jordan left derivation, it's obvious that for each $A,B$ in $\mathcal{A}$ with $A\circ B=0$, we have
$$0=\delta(A\circ B)=2A\delta(B)+2B\delta(A)-(A\circ B)\xi=2(A\delta(B)+B\delta(A)).$$
Thus, $A\delta(B)+B\delta(A)=0$ for each $A,B$ in $\mathcal{A}$ with $A\circ B=0$.
\end{proof}

\begin{corollary}\label{B_CO1}
Let $\mathcal{A}$ be a Banach algebra with the property (B) and have a bounded approximate identity, $\mathcal{M}$ be a Banach left $\mathcal{A}$-module, and $\delta:\mathcal{A}\rightarrow\mathcal{M}$ be a continuous linear operator. Then the following conditions are equivalent:
\vspace{-1ex}
\begin{enumerate}[\indent $\mathrm{(}i\mathrm{)}$]
  \item $\delta$ is a generalized Jordan left derivation,
  \item $\delta$ satisfies (D3),
  \item $\delta$ satisfies (D4).
\end{enumerate}
Furthermore, with one of the above conditions, we have that
\begin{center}
$\delta$ is a Jordan left derivation~~$\Leftrightarrow$~~$\lim\limits_{i\in \mathcal{I}}~A^2\cdot\delta(\rho_i)=0$ for each $A$ in $\mathcal{A}$.
\end{center}
However, if $\mathcal{A}$ has identity $I$, then
\begin{center}
$\delta$ is a Jordan left derivation~~$\Leftrightarrow$~~$I\cdot\delta(I)=0$.
\end{center}
\end{corollary}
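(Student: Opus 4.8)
The plan is to deduce everything from results already established, since Corollary \ref{B_CO1} is essentially a packaging of Theorems \ref{B_TH3}, \ref{B_TH4} and Propositions \ref{B_prop3}, \ref{B_prop4}. First, the equivalence of $(i)$, $(ii)$ and $(iii)$ is immediate: Theorem \ref{B_TH3} gives $(i)\Leftrightarrow(ii)$ and Theorem \ref{B_TH4} gives $(i)\Leftrightarrow(iii)$, so all three conditions are equivalent. For the two remaining ``moreover'' assertions I would assume one of them holds, hence that $\delta$ is a generalized Jordan left derivation; inspecting the proof of Theorem \ref{B_TH3}, the associated element $\xi$ in $\mathcal{M}^{\ast\ast}$ arises as a $\sigma(\mathcal{M}^{\ast\ast},\mathcal{M}^{\ast})$-limit of a subnet of the bounded net $(\delta(\rho_i))_{i\in\mathcal{I}}$, where $(\rho_i)_{i\in\mathcal{I}}$ is the given bounded approximate identity of $\mathcal{A}$, and it is this $\xi$ that will appear in the limit formula below.

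Next I would recover, in the present setting where $\mathcal{M}$ need not be a dual module, the key computation from the proof of Proposition \ref{B_prop4}. Starting from the identity $2A\rho_iA=(A\circ(A\circ\rho_i))-A^2\circ\rho_i$ together with \eqref{B_prop4_0}, one expands $2\delta(A\rho_iA)$, isolates $2A^2\delta(\rho_i)$, and passes to the limit over $i$: since $\delta$ is continuous and $(\rho_i)$ is a bounded approximate identity, every term except the one carrying $\xi$ converges in the norm of $\mathcal{M}$, while that last term converges in the norm of $\mathcal{M}^{\ast\ast}$ because $B\mapsto B\xi$ is bounded; feeding in $2\delta(A^2)=\delta(A\circ A)=4A\delta(A)-2A^2\xi$ then yields
$$\lim_{i\in\mathcal{I}}A^2\delta(\rho_i)=A^2\xi\qquad\text{for each }A\text{ in }\mathcal{A}.$$
The advertised equivalence then drops out: if $\delta$ is a Jordan left derivation, then \eqref{B_prop4_0} forces $(A\circ B)\xi=0$ for all $A,B$, in particular $2A^2\xi=(A\circ A)\xi=0$, so $\lim_i A^2\delta(\rho_i)=0$; conversely, if $\lim_i A^2\delta(\rho_i)=0$ for every $A$, then $A^2\xi=0$, whence $2\delta(A^2)=4A\delta(A)-2A^2\xi=4A\delta(A)$, i.e. $\delta$ is a Jordan left derivation.

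Finally, when $\mathcal{A}$ has an identity $I$, the hypotheses of Proposition \ref{B_prop3} are satisfied (a unital Banach algebra together with an arbitrary Banach left module is of the required type, and $\delta$ is already known to be a generalized Jordan left derivation), so that proposition gives directly that $\delta$ is a Jordan left derivation if and only if $I\cdot\delta(I)=0$. The only delicate point I anticipate is the second paragraph, namely keeping track of which limits live in $\mathcal{M}$ and which in $\mathcal{M}^{\ast\ast}$, since here $\xi$ is only known a priori to lie in $\mathcal{M}^{\ast\ast}$ rather than in $\mathcal{M}$; apart from that bookkeeping the argument is a routine reduction to the preceding theorems and propositions.
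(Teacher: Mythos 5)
Your proposal is correct and follows exactly the route the paper intends: the equivalence of $(i)$--$(iii)$ is just Theorems \ref{B_TH3} and \ref{B_TH4}, the unital case is Proposition \ref{B_prop3}, and the limit criterion is the computation of Proposition \ref{B_prop4}. Your observation that this computation must be rerun with $\xi\in\mathcal{M}^{\ast\ast}$ (since the corollary does not assume $\mathcal{M}$ is a dual module, while Proposition \ref{B_prop4} formally does) is the right bookkeeping, and your argument that the $\xi$-terms converge in norm in $\mathcal{M}^{\ast\ast}$ closes that small gap correctly.
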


\begin{corollary}\label{B_CO2}
Let $\mathcal{A}$ be a Banach algebra with the property ($\mathbb{B}$) and have a bounded approximate identity, $\mathcal{M}$ be a Banach left $\mathcal{A}$-module with $Rann_{\mathcal{M}}(\mathcal{A})=\{0\}$, and $\delta:\mathcal{A}\rightarrow\mathcal{M}$ be a continuous linear operator. Then when $\delta$ satisfies (D1) or (D2),
we have that for each $C$ in $\mathcal{A}$, $\tilde{\delta}=C\cdot\delta$ is a generalized left derivation.\\
\indent Furthermore, if $\mathcal{A}$ has identity $I$, then
\begin{center}
$\tilde{\delta}$ is a left derivation~~$\Leftrightarrow$~~$I\cdot\tilde{\delta}(I)=0$.
\end{center}
\indent Or if $\mathcal{M}$ is a dual left $\mathcal{A}$-module, and $\mathcal{M}_{\ast}$ is a predual right $\mathcal{A}$-module of $\mathcal{M}$, then
\begin{align*}
 (i)~\tilde{\delta}~is~a~left~ derivation~&\Leftrightarrow~(ii)~\lim\limits_{i\in \mathcal{I}}~C\cdot\tilde{\delta}(\rho_i)=0~for~each~C~in~\mathcal{A}\\
 ~&\Leftrightarrow~(iii)~\sigma(\mathcal{M},\mathcal{M}_{\ast})-\lim\limits_{i\in \mathcal{I}}~\tilde{\delta}(\rho_i)=0.
\end{align*}
\end{corollary}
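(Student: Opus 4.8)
The plan is to obtain the corollary by feeding the map $\tilde{\delta} = C\cdot\delta$ into the theorems and propositions already established in this section. First observe that $\tilde{\delta}$ is again a continuous linear operator from $\mathcal{A}$ into the same Banach left $\mathcal{A}$-module $\mathcal{M}$, being the composition of $\delta$ with left multiplication by $C$, so the quoted results are indeed applicable to it.

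For the opening assertion, if $\delta$ satisfies (D1) then Theorem \ref{B_TH1} applies verbatim and gives that $\tilde{\delta} = C\cdot\delta$ is a generalized left derivation for every $C$ in $\mathcal{A}$; if $\delta$ satisfies (D2) then Theorem \ref{B_TH2} applies and yields the same conclusion. The one point worth recording is why (D2) suffices: given $A,B,D$ in $\mathcal{A}$ with $AB = BD = 0$, applying (D2) to the pair $(B,D)$ gives $B\delta(D) + D\delta(B) = 0$, and left multiplying by $A$ together with $AB = 0$ yields $AD\delta(B) = 0$, which is precisely hypothesis (D1). This is exactly the reduction carried out inside the proof of Theorem \ref{B_TH2}, so in either case we are reduced to a generalized left derivation $\tilde{\delta}$, say with $\xi$ in $\mathcal{M}^{\ast\ast}$.

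For the two ``Furthermore'' clauses we invoke the factorization results with $\tilde{\delta}$ in the role of $\delta$. When $\mathcal{A}$ has identity $I$, Proposition \ref{B_prop1} provides an element $\xi$ in $\mathcal{M}$ (not merely in $\mathcal{M}^{\ast\ast}$) with $\tilde{\delta}(AB) = A\tilde{\delta}(B) + B\tilde{\delta}(A) - AB\xi$ for all $A,B$, and with it the equivalence ``$\tilde{\delta}$ is a left derivation $\Leftrightarrow I\cdot\tilde{\delta}(I) = 0$''. When $\mathcal{M}$ is a dual left $\mathcal{A}$-module with predual right $\mathcal{A}$-module $\mathcal{M}_{\ast}$, all the hypotheses of Proposition \ref{B_prop2} are in force here: $\mathcal{A}$ carries a bounded approximate identity $(\rho_i)_{i\in\mathcal{I}}$, $\tilde{\delta}$ is continuous, and $Rann_{\mathcal{M}}(\mathcal{A}) = \{0\}$. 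Hence Proposition \ref{B_prop2}, applied to $\tilde{\delta}$, simultaneously yields that $\xi$ may be taken in $\mathcal{M}$ and the full chain (i) $\Leftrightarrow$ (ii) $\Leftrightarrow$ (iii) displayed in the statement.

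Since every step is a direct citation of an earlier result, there is no genuine obstacle; the only matters needing a line of justification are the reduction (D2) $\Rightarrow$ (D1) noted above and the routine remark that left multiplication by $C$ preserves continuity and leaves the module structures of $\mathcal{A}$ and $\mathcal{M}$ untouched.
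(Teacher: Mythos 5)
Your proposal is correct and matches the intended argument: the paper gives no separate proof of this corollary precisely because it is the direct assembly of Theorems \ref{B_TH1} and \ref{B_TH2} (including the (D2)$\Rightarrow$(D1) reduction you reproduce) with Propositions \ref{B_prop1} and \ref{B_prop2} applied to $\tilde{\delta}=C\cdot\delta$, whose continuity follows from boundedness of the module action as you note.
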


\vspace{1em}

\end{document}